\newtheorem{prop}{Proposition} [section]
\newtheorem{prb}{Problem} [section]
\newtheorem{thm}{Theorem} [section]
\newtheorem*{remark}{Remark}
\newcommand\De{\delta}
\newcommand\G{\gamma}
\newcommand\CB{\mathcal{B}}
\newcommand\CH{\mathcal{H}}
\newcommand\CT{\mathcal{T}}
\newcommand\Ov{\overrightarrow}
\newcommand\Ol{\overline}
\newcounter{num} 
\newcommand{\Fg}[1][]{\thenum}
\definecolor{pink}{rgb}{1.0,.4,.0} 
\definecolor{green2}{rgb}{.7,.7,.7} 
\definecolor{pink2}{rgb}{1.,.8,.1} 
\definecolor{blue2}{rgb}{.6,.5,1} 
\definecolor{blue3}{rgb}{.4,.6,.2} 
\definecolor{pink3}{rgb}{.7,.4,.4} 
\definecolor{mag}{rgb}{.6,.0,1.0} 
\definecolor{rd}{rgb}{1.,.0,.0} 
\begin{document}
\bigskip
\bigskip\bigskip

\begin{center}
{\Large \textbf{Haga's theorems in paper folding and related theorems in 
Wasan geometry Part 2}} \\
\bigskip

\bigskip
\textsc{Hiroshi Okumura} \\

\bigskip\bigskip
\end{center}

\medskip

\textbf{Abstract.} 
We generalize problems in Wasan geometry which involve no folded figures 
but are related to Haga's fold in origami. Using the tangent circles 
appeared in those problems we give a parametric representation of the 
generalized Haga's fold given in the first part of this two-part paper. 

\medskip
\textbf{Keywords.} Haga's fold, 
parametric representation of generalized Haga's fold 

\medskip
\textbf{Mathematics Subject Classification (2010).} 01A27, 51M04 

\bigskip
\bigskip

\section{Introduction}

This is the second part this two-part paper. In the first part we 
considered the generalized Haga's fold. There are several problems 
in Wasan geometry, which do not involve folded figures but are 
closely related Haga's fold. In this second part we consider 
those problems in a general way. Using tangent circles appeared in 
those problems, we give a parametric representation of the 
generalized Haga's fold.

\section{Related problems in Wasam geometry}\label{srw}

In this section we consider several problems in Wasan geometry closely 
related to Haga's fold, though they are not involving folded figures. 
A general solution of the problems is given in the next section. 
We start with two similar problems. The following problem can be found 
in \cite{Furu}, \cite{Toyo1,Toyo4}, \cite{sd1678} and \cite{tz1322}. 

\begin{prb} \label{ptsh} {\rm 
Let $\De$ be a circle of radius $d$ and let $ABCD$ be a rectangle 
sharing its center with $\De$, where the side $AB$ touches $\De$ and 
the side $BC$ intersect $\De$ in two points $($see Figure \ref{ftsh}$)$. 
The inradius of the curvilinear triangle made by $AB$, $BC$ and $\G$ 
is $r$ and the circle touching $BC$ at its midpoint and touching 
the minor arc of $\De$ cut by $BC$ has radius $r$. Find $d$ in terms  
of $r$ (or find $r$ in terms of $d$). } 
\end{prb}

\medskip
\begin{minipage}{0.5\hsize} 
\begin{center} 
\includegraphics[clip,width=40mm]{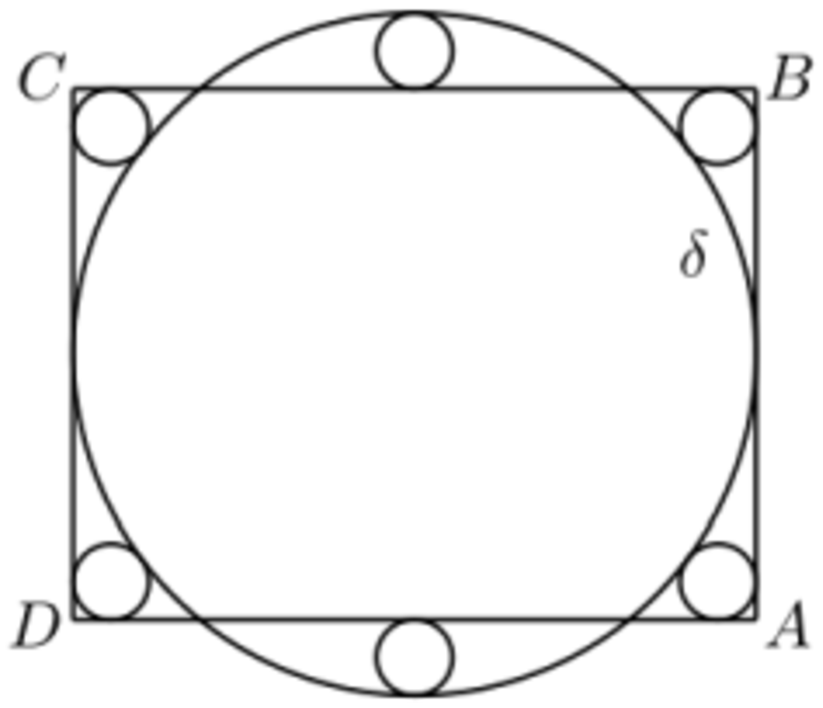}\refstepcounter{num}\label{ftsh}\\
Figure \Fg 
\end{center}  
\end{minipage}
\begin{minipage}{0.5\hsize}
\begin{center} 
\includegraphics[clip,width=48mm]{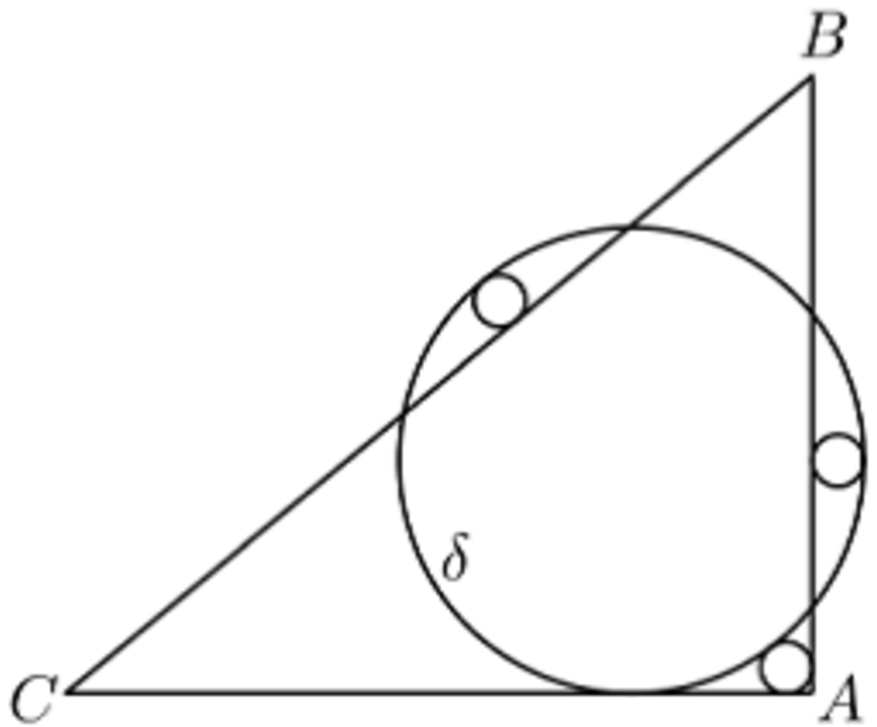}\refstepcounter{num}\label{fg}\\
Figure \Fg 
\end{center} 
\end{minipage}  

\medskip

\begin{prb}\label{pg} {\rm 
Let $\De$ be a circle of radius $d$ and let $ABC$ be a right triangle 
with right angle at $A$. The side $CA$ touches $\De$, and each of the 
sides $AB$ and $BC$ intersects $\De$ in two points. The inradius of 
the curvilinear triangle made by $CA$, $AB$ and $\De$ equals $r$. The 
maximal circle touching $AB$ from the side opposite to $C$ and 
touching $\De$ internally, and the maximal circle touching $BC$ from 
the side opposite to $A$ and touching $\De$ internally have radius $r$. 
Find $r$ in terms of $d$. }
\end{prb}

\medskip\medskip
\begin{minipage}{.38 \hsize} 
\begin{center} 
\includegraphics[clip,width=40mm]{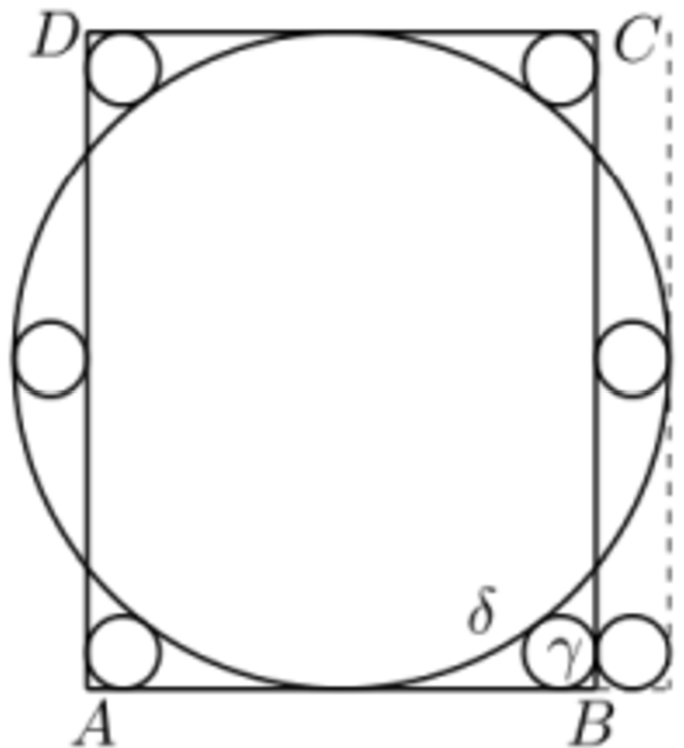}\refstepcounter{num}\label{ftsrev}\\
\vskip1mm
Figure \Fg 
\end{center} 
\end{minipage}
\begin{minipage}{.38 \hsize}
\begin{center}
\vskip2.2mm
\includegraphics[clip,width=48mm]{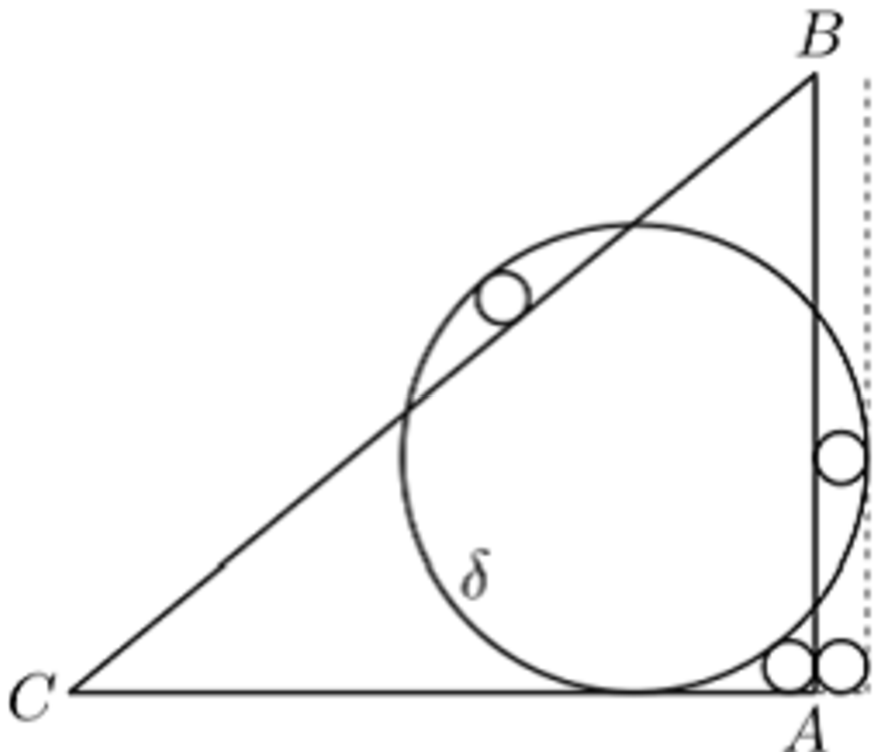}\refstepcounter{num}\label{fgrev}\\
\vskip1mm
Figure \Fg 
\end{center} 
\end{minipage}
\begin{minipage}{.24 \hsize}
\begin{center} 
\vskip10.5mm
\includegraphics[clip,width=28mm]{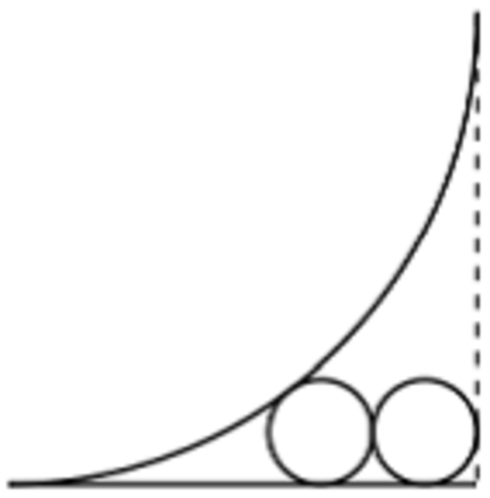}\refstepcounter{num}\label{fes}\\
\vskip.9mm
Figure \Fg 
\end{center} 
\end{minipage} 
\medskip

We show that the two problems are essentially the same. 
Let $\G$ be the incircle of the curvilinear triangle made by $AB$, $BC$ 
and $\De$ in Problem \ref{ptsh} (see Figure \ref{ftsrev}). If we draw the 
line parallel to $BC$ touching $\De$ and the reflection of $\G$ in the 
line $BC$ and extend the side $AB$, we get Figure \ref{fes}. 
We can also get the same figure from Figure \ref{fg} in a similar way 
(see Figure \ref{fgrev}). Therefore the two problems are essentially the 
same. Problems with Figure \ref{fes} can also be found in \cite{Ishida}, 
\cite{ito}, \cite{Toyo1,Toyo4}, \cite{zzkiou}, \cite{ets}, \cite{kan}, 
\cite{ssk} and \cite{uk}. 
A generalization of Problem \ref{ptsh} can be found in \cite{OKMIQ94}.

We state Problems \ref{ptsh} and \ref{pg} so that the body text gives 
enough information without the figures. However the most informations of 
the problems in Wasan geometry are given by the figures, thereby the body 
texts play only subsidiary roles. The next sangaku problem is stated in 
such a way \cite{gunma}: 

\medskip
\begin{minipage}{.5 \hsize}
\begin{center}
\vskip 1mm
\includegraphics[clip,width=48mm]{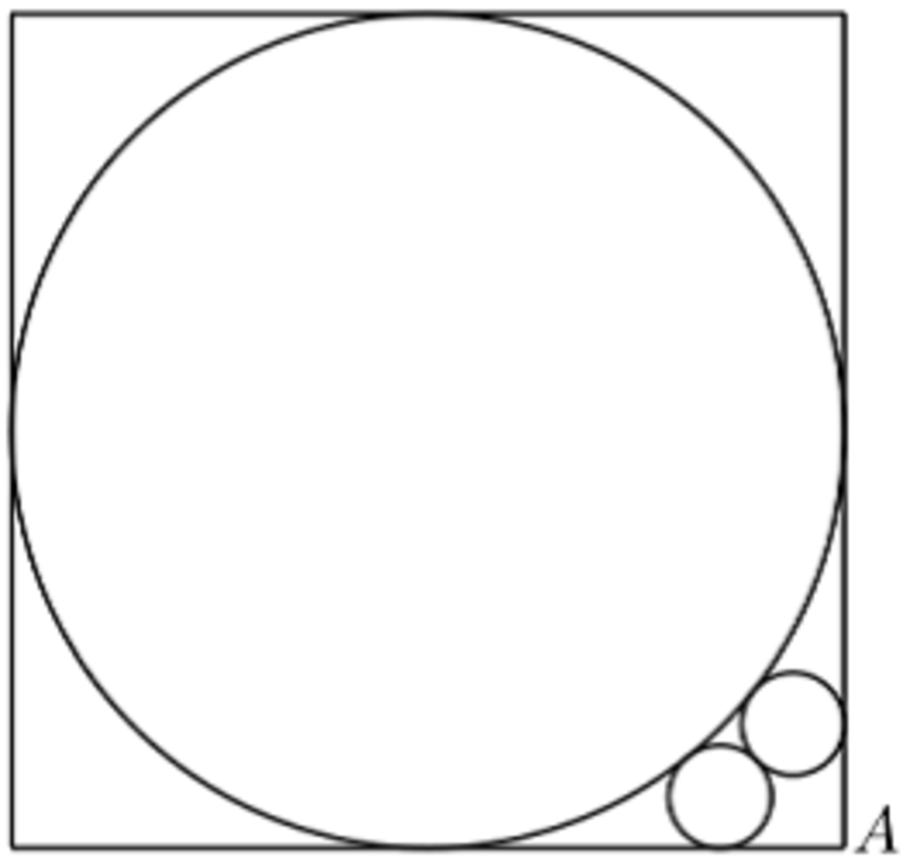}\refstepcounter{num}\label{fsantai}\\
\vskip 1mm
Figure \Fg . 
\end{center}
\end{minipage}
\begin{minipage}{.5 \hsize}
\begin{center}
\includegraphics[clip,width=48mm]{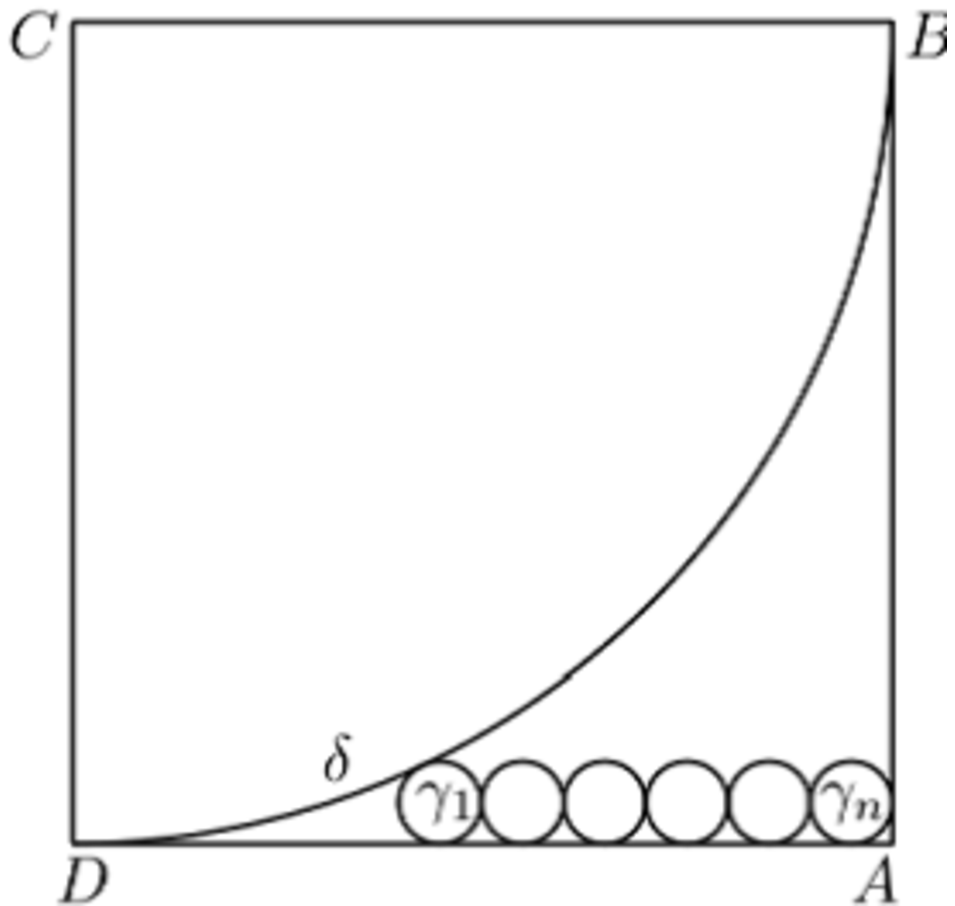}\refstepcounter{num}\label{f6}\\
Figure \Fg . 
\end{center}
\end{minipage}

\begin{prb}\label{psantai} {\rm 
There are a large circle of radius $d$ and two small circles of radius 
$r$ in a square as in Figure \ref{fsantai}. Show $r$ in terms of $d$.
}\end{prb}

We show the problem is incorrect using the next proposition. 

\begin{prop}\label{p1}
If an external common tangent of externally touching 
two circles of radii $r$ and $s$ touches the circles at points $P$ 
and $Q$, then $|PQ|=2\sqrt{rs}$. 
\end{prop}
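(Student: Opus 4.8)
The plan is to reduce the statement to a single computation involving the distance between the two centres. Denote by $O_1$ and $O_2$ the centres of the two circles, of radii $r$ and $s$ respectively. Since the circles touch externally, the essential input is that $|O_1O_2| = r+s$; I would record this at the outset, as it is the only place where the \emph{external} tangency hypothesis enters the argument.

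Next I would exploit the fact that a tangent line is perpendicular to the radius drawn to its point of tangency. Hence $O_1P \perp PQ$ and $O_2Q \perp PQ$, so the segments $O_1P$ and $O_2Q$ are parallel and $O_1PQO_2$ is a right trapezoid with right angles at $P$ and $Q$. To convert this into a right triangle I would drop the perpendicular from $O_2$ onto the line through $O_1$ and $P$, meeting it at a point $H$. The quadrilateral $PHO_2Q$ then has four right angles, so it is a rectangle; this gives $|O_2H| = |PQ|$ and $|PH| = s$, and therefore $|O_1H| = |r-s|$.

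The final step is the Pythagorean theorem in the right triangle $O_1HO_2$, whose hypotenuse is $O_1O_2$:
$$|PQ|^2 = |O_2H|^2 = |O_1O_2|^2 - |O_1H|^2 = (r+s)^2 - (r-s)^2 = 4rs,$$
whence $|PQ| = 2\sqrt{rs}$.

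The argument is essentially forced, so I do not anticipate a real obstacle; the only delicate point is the bookkeeping around the auxiliary foot $H$. I would take care to verify that $PHO_2Q$ is genuinely a rectangle, so that $|O_2H|$ equals the full tangent length $|PQ|$, and to note the degenerate case $r=s$, where the trapezoid is already a rectangle and $H=P$, giving $|PQ| = 2r$ in agreement with the formula.
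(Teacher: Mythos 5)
Your proof is correct: the right trapezoid $O_1PQO_2$, the foot $H$ making $PHO_2Q$ a rectangle, and the Pythagorean computation $(r+s)^2-(r-s)^2=4rs$ are all sound, and you have correctly isolated the external tangency as the sole source of $|O_1O_2|=r+s$. The paper states Proposition \ref{p1} without proof, treating it as a classical fact, so there is nothing to compare against; your argument is the standard one and fills that gap adequately.
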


\medskip
The answer says $r=d/9$. But we have $d=2\sqrt{rd}+\sqrt{2}r+r$ by the 
proposition. Hence we get 
$d=\left(3+\sqrt{2}+2\sqrt{2+\sqrt{2}}\right)r\approx 8.11r$. Therefore 
the assertion of the problem is incorrect. It seem that the two small 
circles were described as in Figure \ref{fes} in the original problem, 
however Figure \ref{fsantai} was used by transcription error. 
A general case was considered by Toyoyoshi (see Figure \ref{f6}): 

\begin{prb}[\cite{Toyo2}]\label{pbt} {\rm
Let $\De$ be a circle of radius $d$ with center $C$ passing through $B$ 
for a square $ABCD$. Let $\G_1$, $\G_2$, $\cdots$, $\G_n$ be congruent 
circles of radius $r$ touching $DA$ from the same side such that $\G_1$ 
and $\G_2$ touch and $\G_i$ $(i=3,4,\cdots n)$ touches $\G_{i-1}$ from 
the side opposite to $\G_1$, also $\G_1$ touches $\De$ externally and 
$\G_n$ touches $AB$ from the same side as $\De$. Show $r$ in terms of 
 $s$ and $n$. }
\end{prb}

\section{Generalized figure}\label{sgc}

We generalize the figures of Problems \ref{pbt}. Let $k$ and $l$ be 
perpendicular lines intersecting in a point $A$ and let $\G$ be a circle 
of radius $r$ touching $k$ at a point $K$.  Let $\De_1$ and $\De_2$ be circles 
of radii $d_1$ and $d_2$ ($d_2\le d_1$), respectively, touching $l$ from the same 
side, such that they touch $k$ from the same side as $\G$, and also touch 
$\G$ externally, where if $\G$ touches $k$ at $A$, we consider $\De_1$ is 
the circle touching $k$, $l$ and $\G$ externally, $\De_2=A$ and $d_2=0$. We 
denote the figure of $k$, $l$, $\G$, $\De_1$ and $\De_2$ by $\CT(n)$, where 
\begin{equation}\label{eqndef}
n=\frac{\tau|AK|}{2r}+\frac{1}{2}
\end{equation} 
and $\tau=1$ if $\De_1$ and $K$ lies on the same side of $l$ otherwise 
$\tau=-1$ (see Figures \ref{fdefcn} and \ref{fminogasi}). We also consider the 
case in which $\G$ degenerates to a point $K\not=A$ on $k$. We consider that  
$\De_1$ and $\De_2$ coincide and touch $k$ at $K$ in this case, and figure is 
denoted by $\CT\left(\Ol{0}\right)$ (see Figure \ref{fdefinf}).
We define the value of $\Ol{0}$ equals $0$. 

\medskip
\begin{minipage}{.5\hsize}
\begin{center} 
\includegraphics[clip,width=48mm]{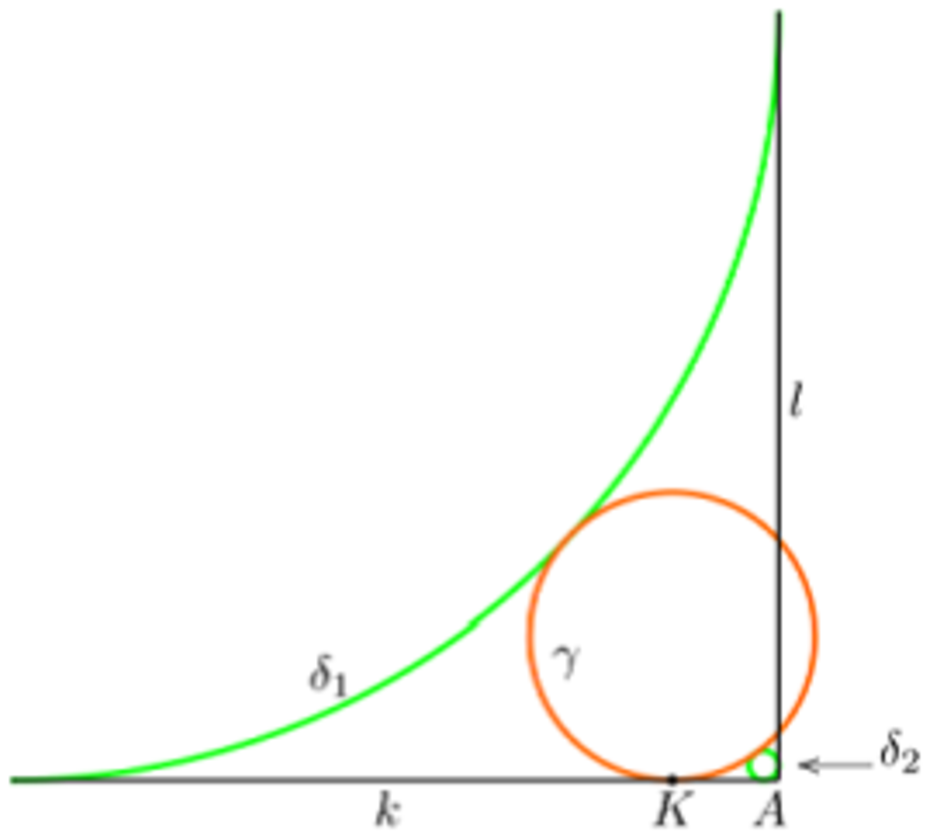}\refstepcounter{num}\label{fdefcn}\\
\medskip
Figure \Fg : $\tau=1$ $(1/2<n)$
\end{center}  
\end{minipage}
\begin{minipage}{.5\hsize}
\begin{center}
\vskip8.5mm
\includegraphics[clip,width=48mm]{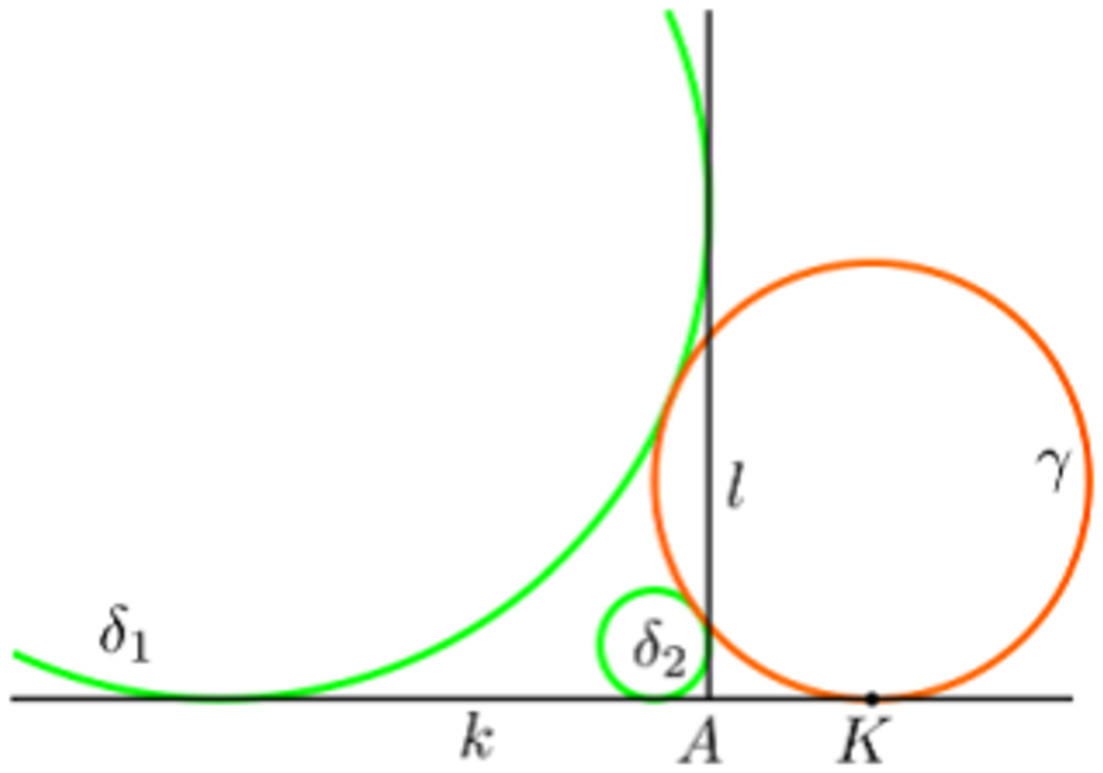}\refstepcounter{num}\label{fminogasi}\\
\vskip.7mm
Figure \Fg : $\tau=-1$ $(0<n<1/2)$
\end{center}
\end{minipage}
\medskip

\begin{minipage}{.26\hsize}
\begin{center} 
\vskip2mm 
\includegraphics[clip,width=30mm]{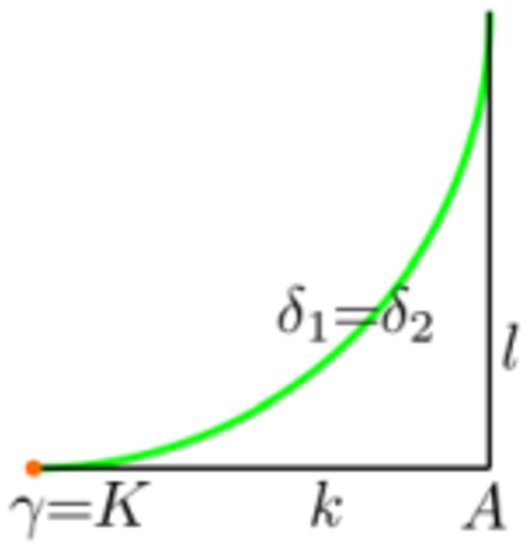}\refstepcounter{num}\label{fdefinf}\\
\vskip0.5mm
Figure \Fg : $\CT\left(\Ol{0}\right)$
\end{center}  
\end{minipage}
\begin{minipage}{.29\hsize}
\begin{center}
\vskip.9mm
\includegraphics[clip,width=35mm]{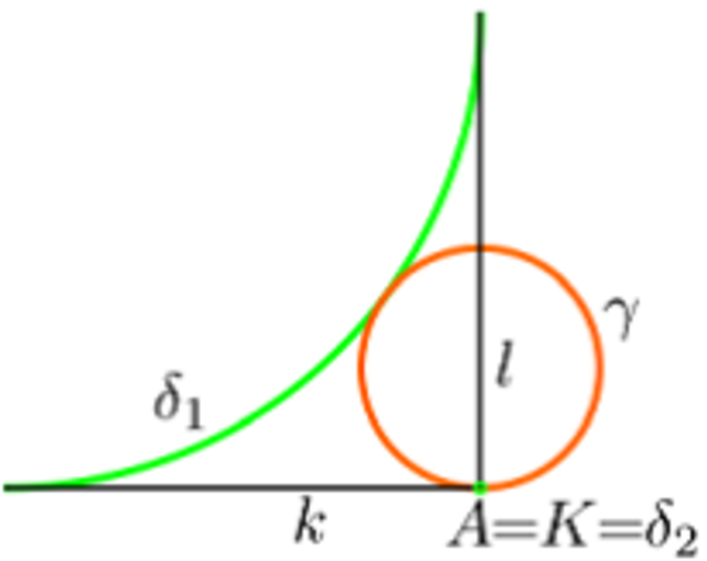}\refstepcounter{num}\label{fdefhalf}\\
\vskip0.4mm
Figure \Fg : $\CT(1/2)$
\end{center}
\end{minipage}
\begin{minipage}{.45\hsize}
\begin{center}
\vskip1.0mm
\includegraphics[clip,width=48mm]{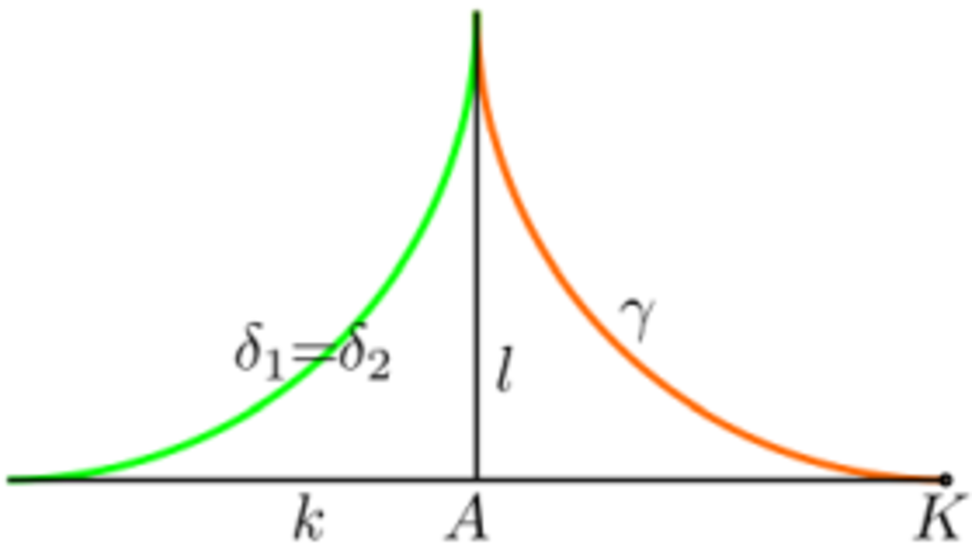}\refstepcounter{num}\label{fdef0}\\
\vskip.2mm
Figure \Fg : $\CT(0)$
\end{center}
\end{minipage}
\medskip

\begin{minipage}{.5\hsize}
\begin{center} 
\includegraphics[clip,width=48mm]{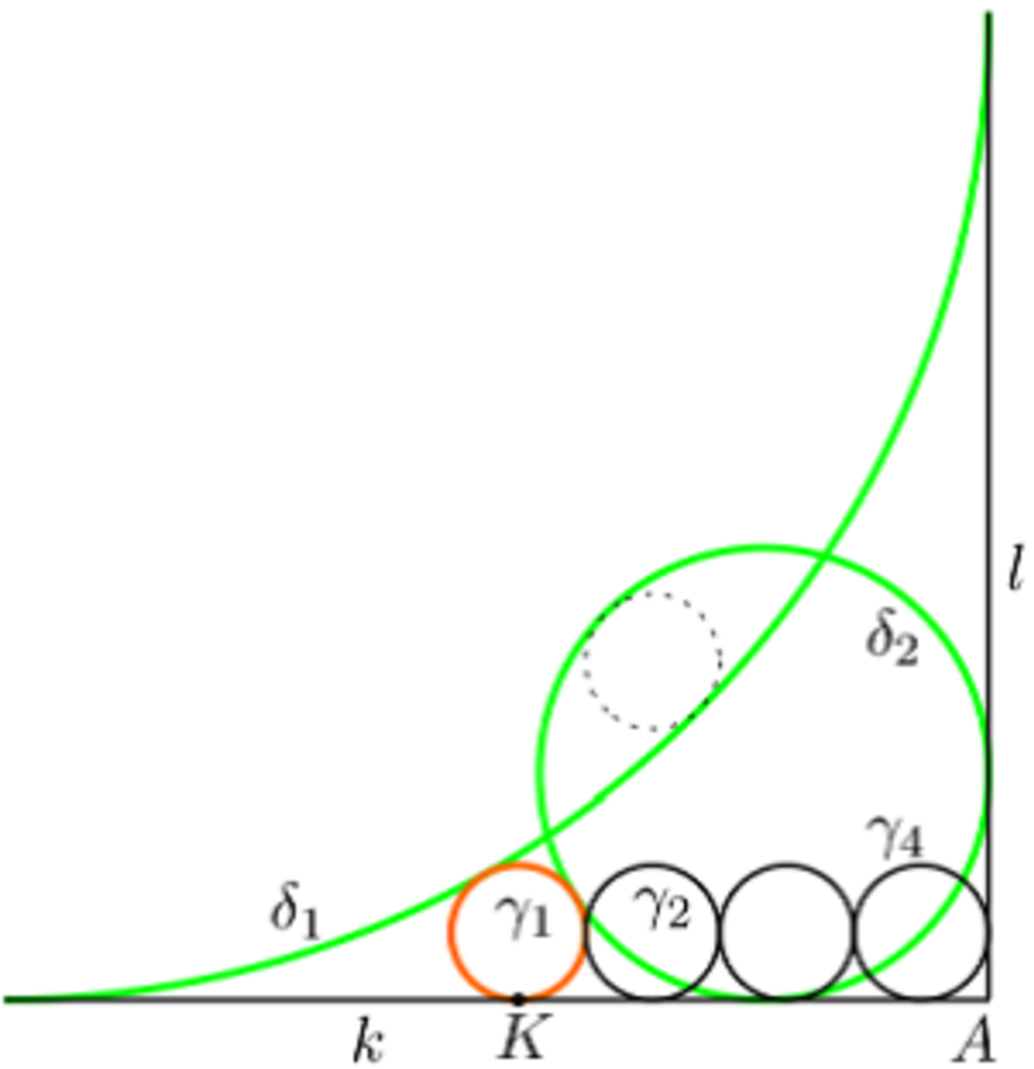}\refstepcounter{num}\label{f4}\\
\medskip\medskip
Figure \Fg : $\CT(4)$
\end{center}  
\end{minipage}
\begin{minipage}{.5\hsize}
\begin{center} 
\vskip3mm
\includegraphics[clip,width=48mm]{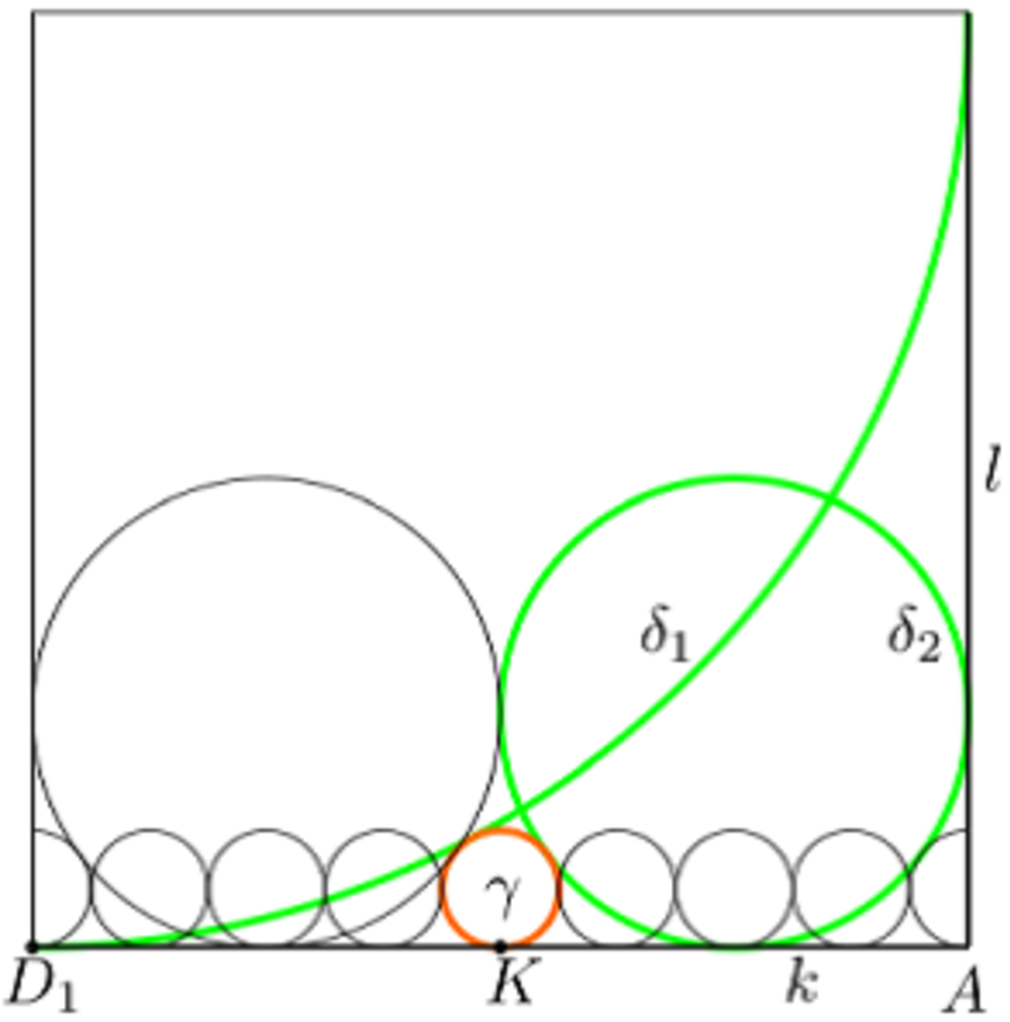}\refstepcounter{num}\label{f4half}\\
\vskip.7mm
Figure \Fg : $\CT(9/2)$
\end{center}  
\end{minipage}
\medskip

Notice that $K$ coincides with $A$ if and only if $n=1/2$ (see Figure 
\ref{fdefhalf}), also $1/2<n$ or $0<n<1/2$ according as $K$ and $\De_1$ 
lie on the same side of $l$ or not. If $n=0$, $\De_1$ coincides with $\De_2$ 
and is the reflection of $\G$ in $l$ (see Figure \ref{fdef0}). 
Our definition of $\CT(n)$ implies $n\ge0$ for any $n$. 
If $n$ is a natural number, there are circles $\G_1$, $\G_2$, $\cdots$, 
$\G_n$ of radius $r$ touching $k$ from the same side such that $\G=\G_1$, 
$\G_1$ and $\G_2$ touch, $\G_i$ $(i=3,4,\cdots n)$ touches $\G_{i-1}$ from 
the side opposite to $\G_1$, and $l$ is the external common tangent of 
$\G_n$ and $\De_1$ (see Figure \ref{f4}). This is the case considered by 
Toyoyoshi stated as Problem \ref{pbt}. 
If $n=2$, the circles $\G_2$ and $\De_2$ coincide (see Figure \ref{fc2}, 
where regard $\G_1=\G(2)$ in the figure). 
If we add the reflection of $\De_1$ in $l$ and remove $\De_2$ and $l$ for 
$\CT(n/2)$ in the case $n$ being a natural number, the resulting figure is 
a part of the figure $\CB(n)$ in \cite{OKSJM171}. i.e., $\CT(n)$ is 
a generalization of $\CB(n)$ in this sense. The first half of Theorem 
\ref{tg}(i) gives a solution of Problems \ref{ptsh}, \ref{pg} and \ref{pbt}. 

\begin{thm}\label{tg} 
The following statements are true for $\CT(n)$. \\
\noindent 
{\rm (i)} If $n\not=\Ol{0}$, 
$d_1=\left(\sqrt{2n}+1\right)^2r$ 
and $d_2=\left(\sqrt{2n}-1\right)^2r$. \\
\noindent 
{\rm (ii)} $|AK|=\sqrt{d_1d_2}$. \\ 
\noindent 
{\rm (iii)} 
$  
\sqrt{r}=\left\{ 
\begin{array}{llll}\nonumber
\dfrac{\sqrt{d_1}+\sqrt{d_2}}{2} & \hbox{if $0\le n\le1/2$,} \\ 
\dfrac{\sqrt{d_1}-\sqrt{d_2}}{2} & \hbox{if $1/2<n$ or $n=\Ol{0}$.}  
\end{array} \right.
$  
\end{thm}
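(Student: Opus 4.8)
The plan is to introduce Cartesian coordinates with $A$ at the origin, the line $k$ as the $x$-axis and $l$ as the $y$-axis, oriented so that $\De_1$ and $\De_2$ lie in the half-plane $x>0$. Since $\G$ touches $k$ at $K$ from above, its center is $(p,r)$ with $K=(p,0)$ and $|AK|=|p|$; since each $\De_i$ touches both $k$ and $l$ from the fixed sides, its center is $(d_i,d_i)$ and it touches $k$ at the point $(d_i,0)$. Because $\G$ and $\De_i$ touch externally and $k$ is a common external tangent touching them at $K$ and $(d_i,0)$, Proposition \ref{p1} gives $|p-d_i|=2\sqrt{r d_i}$, that is
\[
(p-d_i)^2 = 4 r d_i \qquad (i=1,2).
\]
(Equivalently this follows by equating the distance between the two centers to $r+d_i$.)

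The crux is to translate the definition \eqref{eqndef} of $n$ into a statement about the signed coordinate $p$. I would check, according to which side of $l$ contains $K$, that with the chosen orientation the identity $\tau|AK|=p$ holds in every case: if $K$ and $\De_1$ lie on the same side then $p>0$, $\tau=1$ and $\tau|AK|=p$; if they lie on opposite sides then $p<0$, $\tau=-1$ and again $\tau|AK|=(-1)(-p)=p$; and if $K=A$ then $p=0=\tau|AK|$. Hence \eqref{eqndef} reads $p=(2n-1)r$ uniformly, and substituting this into the tangency relation turns both equations into the single quadratic
\[
d^2 - 2(2n+1) r\, d + (2n-1)^2 r^2 = 0,
\]
whose discriminant equals $8n\,r^2$; its two roots are $(2n+1)r \pm 2\sqrt{2n}\,r = (\sqrt{2n}\pm1)^2 r$. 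Assigning the larger root to $d_1$ and the smaller to $d_2$ (consistent with $d_2\le d_1$) gives statement (i).

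Statements (ii) and (iii) then follow by elementary manipulation. For (ii), the product of the roots gives $d_1 d_2=(2n-1)^2 r^2$, so $\sqrt{d_1 d_2}=|2n-1|\,r=|p|=|AK|$; and the degenerate figure $\CT(\Ol{0})$, where $\De_1=\De_2$ touches $k$ at $K$ so that $|AK|=d_1=d_2$, satisfies (ii) trivially. For (iii), from (i) I would write $\sqrt{d_1}=(\sqrt{2n}+1)\sqrt r$ and $\sqrt{d_2}=|\sqrt{2n}-1|\sqrt r$ and split according to the sign of $\sqrt{2n}-1$: when $0\le n\le 1/2$ one has $\sqrt{2n}\le1$, whence $\sqrt{d_1}+\sqrt{d_2}=2\sqrt r$, while when $n>1/2$ one has $\sqrt{2n}>1$, whence $\sqrt{d_1}-\sqrt{d_2}=2\sqrt r$; for $n=\Ol{0}$ we have $r=0$ and $d_1=d_2$, so the second formula reduces to $0=0$.

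I expect the main obstacle to be the careful side-and-sign bookkeeping in the second paragraph — verifying that the single identity $p=(2n-1)r$ correctly encodes every configuration of $\tau$ and of the side of $l$ on which $K$ falls — together with a clean treatment of the boundary and degenerate cases $n=1/2$ (where $d_2=0$ and $\De_2=A$), $\CT(0)$, and $\CT(\Ol{0})$, which (i) explicitly excludes. Once that identity is established, the quadratic and all three conclusions are routine.
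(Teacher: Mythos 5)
Your proposal is correct and follows essentially the same route as the paper: both rest on Proposition \ref{p1} applied to the tangency points on $k$, giving $|p-d_i|=2\sqrt{rd_i}$ with $\tau|AK|=(2n-1)r$, from which (i) follows by solving a quadratic and (ii), (iii) by elementary manipulation. Your packaging of the two sign cases into a single quadratic in $d$ (with Vieta giving (ii) directly) is a minor streamlining of the paper's explicit case analysis on $\tau$, not a different method.
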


\begin{proof} If $n\not=\Ol{0}$, by Proposition \ref{p1} 
$d_2=\tau|AK|-2\sqrt{rd_2}=(2n-1)r-2\sqrt{rd_2}$ if $\tau=1$, and 
$d_2=\tau|AK|+2\sqrt{rd_2}=(2n-1)r+2\sqrt{rd_2}$ if $\tau=-1$, which 
yield $d_2=(\sqrt{2n}\pm1)^2r$ in both the cases. 
Also we have $d_1=\tau|AK|+2\sqrt{rd_1}=(2n-1)r+2\sqrt{rd_1}$, 
which also yields $d_1=(\sqrt{2n}\pm1)^2r$. This proves (i). 
The part (ii) is obvious if $n=\Ol{0}$. If $n\not=\Ol{0}$, (ii) 
follows from $|AK|=|2n-1|r$ and (i). The part (iii) is trivial if 
$n=\Ol{0}$. If $n\not=\Ol{0}$, eliminating $n$ from the two 
equations in (i) we get (iii). 
\end{proof}

If $n=4$, $\De_1$ and $\De_2$ intersect and the maximal circle 
touching $\De_1$ and $\De_2$ from inside of them has radius $r$, which 
is obtained by translating $\G_2$ parallel to $l$, through distance 
$4r$ (see Figure \ref{f4}). Let $D_i$ be the point of contact of 
$\De_i$ and $k$. If $n=9/2$, then $d_1=4d_2=16r$ and $K$ is the 
midpoint of $D_1A$ (see Figure \ref{f4half}). Problems considering 
this case with the circle $\De_2$ can be found in \cite{Toyo3, 
Toyo35}, \cite{Gazen, Gazen2} and \cite{zzkiou}. 
However the circle $\De_2$ seems to have been ignored for the figure 
$\CT(n)$ in most cases except this case. 
Let $E_i$ be the point of intersection of $k$ and the internal common 
tangent of $\De_i$ and $\G$, if $\De_i$ and $\G$ are proper circles 
(see Figure \ref{f63}). 

\begin{thm}\label{tccn} 
If $n\not=0, \Ol{0}$, the following statements hold for $\CT(n)$. \\ 
\noindent{\rm (i)}
The point $E_1$ divides $D_1A$ internally in the ratio $1:\sqrt{2n}$.\\
\noindent{\rm (ii)} 
If $n\not=1/2$, the point $E_2$ divides $D_2A$ externally in the ratio 
$1:\sqrt{2n}$. 
\end{thm}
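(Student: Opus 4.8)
The plan is to work in coordinates adapted to the two perpendicular lines. Place $A$ at the origin with $k$ the $x$-axis and $l$ the $y$-axis, and take all the circles to lie above $k$. Then $\De_i$ touches $k$ at $D_i=(d_i,0)$ and $\G$ touches $k$ at $K=((2n-1)r,0)$; the value of the abscissa of $K$ is consistent with $|AK|=|2n-1|r$ recorded in the proof of Theorem \ref{tg}, together with the sign rule built into $\tau$. The hypotheses $n\neq 0,\Ol{0}$ guarantee that $\De_i$ and $\G$ are genuine circles touching externally, so all of the objects below are defined.

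The key observation is that, for the two externally touching circles $\De_i$ and $\G$, the internal common tangent is nothing but the common tangent line at their point of contact $T_i$, and this line is exactly their radical axis. Hence every point of it has equal tangent lengths to $\De_i$ and to $\G$. Applying this to $E_i$, which lies both on this tangent and on $k$, and using that $k$ itself is tangent to $\De_i$ at $D_i$ and to $\G$ at $K$, I obtain
\[
|E_iD_i|=|E_iT_i|=|E_iK|.
\]
Thus $E_i$ is simply the midpoint of the segment $D_iK$; equivalently, $E_i$ bisects the external common tangent chord $D_iK$, whose length is $2\sqrt{d_ir}$ by Proposition \ref{p1}.

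With this midpoint description the rest is a substitution using Theorem \ref{tg}(i). For $i=1$ the abscissa of $E_1$ is $\tfrac12\bigl(d_1+(2n-1)r\bigr)=\sqrt{2n}\,(\sqrt{2n}+1)r$, which I compare with $D_1=(\sqrt{2n}+1)^2r$ and $A=0$; the resulting lengths $|D_1E_1|=(\sqrt{2n}+1)r$ and $|E_1A|=\sqrt{2n}(\sqrt{2n}+1)r$ give the ratio $1:\sqrt{2n}$, and since $E_1$ lies strictly between $D_1$ and $A$ the division is internal, proving (i). For (ii) the same formula with $d_2=(\sqrt{2n}-1)^2r$ yields the abscissa $\sqrt{2n}(\sqrt{2n}-1)r$ for $E_2$; here $|D_2E_2|=|\sqrt{2n}-1|\,r$ and $|E_2A|=\sqrt{2n}\,|\sqrt{2n}-1|\,r$ again give the ratio $1:\sqrt{2n}$.

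The only real care is in the sign analysis that distinguishes internal from external division for $E_2$, which is where the excluded value $n=1/2$ enters. I would treat $n>1/2$ (so $\sqrt{2n}>1$) and $0<n<1/2$ (so $\sqrt{2n}<1$) separately and check that in each case $E_2$ falls outside the segment $D_2A$ — for $n>1/2$ beyond $D_2$, for $n<1/2$ beyond $A$ — so that the division is external; the borderline $n=1/2$ must be removed because there $K=A$, $d_2=0$ and $\De_2$ degenerates to the point $A$, making $E_2$ and the ratio meaningless. This case distinction, rather than any hard computation, is the main thing to get right.
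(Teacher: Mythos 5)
Your proposal is correct and follows essentially the same route as the paper: both arguments hinge on the fact that $E_i$ is the midpoint of the tangent segment $D_iK$, then substitute $d_i=(\sqrt{2n}\pm1)^2r$ from Theorem \ref{tg}(i) and sort out the relative positions of $D_i$, $A$, $E_i$ to identify the division as internal or external. The only differences are cosmetic: you justify the midpoint claim via the radical-axis/equal-tangent-length argument (which the paper simply asserts) and you run the length computation in coordinates, which packages the paper's two positional cases into a single formula for the abscissa of $E_i$.
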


\begin{proof} 
Let $n\not=0, \Ol{0}$. We prove (ii). Let $n\not=1/2$. Since $E_2$ is the 
midpoint of the segment $D_2K$, 
$|D_2E_2|=\sqrt{d_2r}=\left|\sqrt{2n}-1\right|r$ by Proposition \ref{p1} 
and Theorem \ref{tg}(i). 
If $1/2<n$, $D_2$ lies between $A$ and $E_2$. 
Hence $|AE_2|=|D_2E_2|+d_2=
\left(\sqrt{2n}-1\right)r+\left(\sqrt{2n}-1\right)^2r=
\sqrt{2n}\left|\sqrt{2n}-1\right|r$. 
If $0<n<1/2$, $A$ lies between $D_2$ and $E_2$ (see Figure \ref{f63}). 
Therefore $|AE_2|=|D_2E_2|-d_2=
\left(1-\sqrt{2n}\right)r-\left(1-\sqrt{2n}\right)^2r=
\sqrt{2n}\left|\sqrt{2n}-1\right|r$. Hence we get 
$|D_2E_2|:|AE_2|=1:\sqrt{2n}$ in both the cases. 
The part (i) is proved in a similar way.
\end{proof}

\begin{center}
\includegraphics[clip,width=65mm]{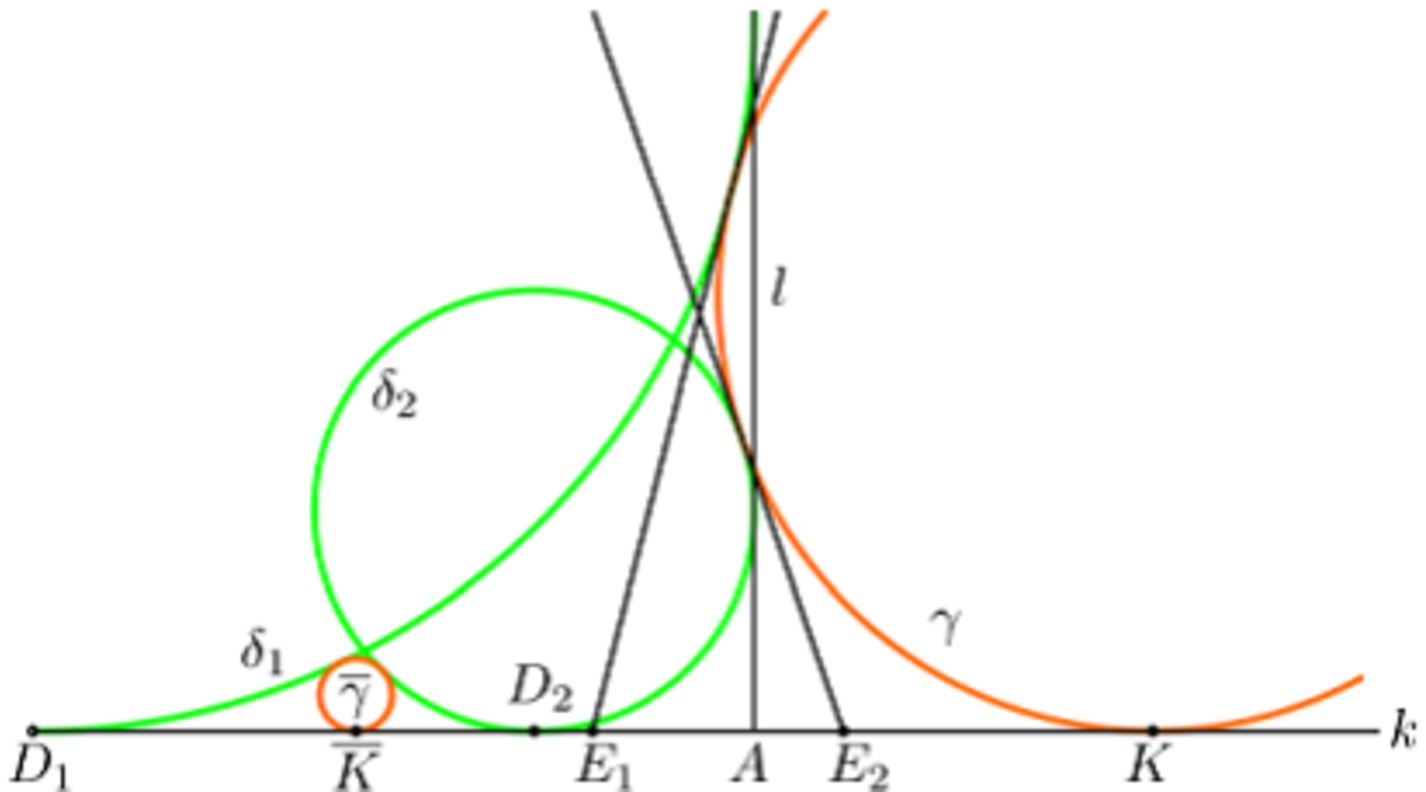}\refstepcounter{num}\label{f63}\\
\vskip4mm
Figure \Fg : $\CT(n)$ and $\CT(\Ol{n})$ in the case $0<n<1/2$
\end{center}

\medskip
Let $\Ol{\G}=\G$ if $n=1/2$, $\Ol{\G}=D_1$ if $n=0$, $\Ol{\G}$ be 
the reflection of $\De_1$ in $l$ if $n=\Ol{0}$, and let 
$\Ol{\G}$ be the remaining circle touching $k$ and $\De_1$, $\De_2$ 
externally if $n\not=1/2, 0, \Ol{0}$ for the figure $\CT(n)$. 
We denote the figure consisting of the circles $\De_1$, $\De_2$, 
$\Ol{\G}$ and the lines $k$ and $l$ by $\CT(\Ol{n})$ (see Figure 
\ref{f63}). Notice that $\Ol{\Ol{\G}}=\G$, 
$\CT\left(\Ol{\Ol{n}}\right)=\CT(n)$, 
$\CT\left(\Ol{1/2}\right)=\CT(1/2)$. 
(see Figures \ref{fdefinf} to \ref{fdefhalf}).  
Let $\Ol{K}$ be the point of contact of $\Ol{\G}$ and $k$, and 
let $\Ol{r}$ be the radius of $\Ol{\G}$. 
We now assume the definition of the division by zero \cite{kmsy}, i.e., 
$n/0=0$ for any real number $n$. Hence we also get $n/\Ol{0}=0$.

\begin{thm}\label{ttwo} The following statements hold for $\CT(n)$ 
and $\CT(\overline{n})$. \\
\noindent{\rm (i)}  $|AK|=|A\Ol{K}|$. \hskip32.5mm 
\noindent{\rm (ii)} $2n=\dfrac{1}{2\Ol{n}}$. \\
\noindent{\rm (iii)} $2(r+\Ol{r})=d_1+d_2$. \hskip21.7mm
\end{thm}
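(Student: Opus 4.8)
The plan is to use that $\CT(\Ol n)$ is again a figure of the same kind as $\CT(n)$, built on the \emph{same} lines $k,l$ and the \emph{same} circles $\De_1,\De_2$, with $\G$ replaced by the second circle $\Ol\G$ tangent to $k$ and externally tangent to both $\De_1$ and $\De_2$. Consequently Theorem \ref{tg} applies to $\CT(\Ol n)$ with $(r,n,K)$ replaced by $(\Ol r,\Ol n,\Ol K)$, giving
\begin{equation}\label{eqbar}
d_1=\left(\sqrt{2\Ol n}+1\right)^2\Ol r,\qquad d_2=\left(\sqrt{2\Ol n}-1\right)^2\Ol r,\qquad |A\Ol K|=\sqrt{d_1d_2}.
\end{equation}
Since Theorem \ref{tg}(ii) also gives $|AK|=\sqrt{d_1d_2}$ for $\CT(n)$, part (i) follows at once: $|AK|=\sqrt{d_1d_2}=|A\Ol K|$.

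For (ii) I would treat the generic case $d_1>d_2>0$ first and read the two formulas of Theorem \ref{tg}(i) as a system in the unknowns $(\rho,m)$,
\[
d_1=\left(\sqrt{2m}+1\right)^2\rho,\qquad d_2=\left(\sqrt{2m}-1\right)^2\rho.
\]
Setting $Q=\bigl(\sqrt{d_1}+\sqrt{d_2}\bigr)/\bigl(\sqrt{d_1}-\sqrt{d_2}\bigr)$, one checks that this system has exactly two solutions, namely $\sqrt{2m}=Q$ (with $m>1/2$) and $\sqrt{2m}=1/Q$ (with $m<1/2$). Both $(r,n)$ and $(\Ol r,\Ol n)$ satisfy the system, by Theorem \ref{tg}(i) and \eqref{eqbar}; and they are distinct as pairs, since a circle lying on the fixed side of $k$ is determined by its radius together with the signed position of its contact point with $k$---equivalently by its (radius, parameter) pair---so $\G\neq\Ol\G$ forces $(r,n)\neq(\Ol r,\Ol n)$. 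Hence $\{\sqrt{2n},\sqrt{2\Ol n}\}=\{Q,1/Q\}$, so $\sqrt{2n}\,\sqrt{2\Ol n}=1$ and therefore $4n\Ol n=1$, which is exactly $2n=1/(2\Ol n)$.

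Part (iii) then follows without a further case analysis. Summing the two identities of Theorem \ref{tg}(i) gives $d_1+d_2=(4n+2)r$, hence $2r=(d_1+d_2)/(2n+1)$, and likewise $2\Ol r=(d_1+d_2)/(2\Ol n+1)$ from \eqref{eqbar}. Thus
\[
2(r+\Ol r)=(d_1+d_2)\left(\frac{1}{2n+1}+\frac{1}{2\Ol n+1}\right)
=(d_1+d_2)\,\frac{2n+2\Ol n+2}{4n\Ol n+2n+2\Ol n+1},
\]
and inserting $4n\Ol n=1$ from (ii) reduces the fraction to $1$, so $2(r+\Ol r)=d_1+d_2$.

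The main obstacle is the bookkeeping at the boundary, not the generic algebra: I must still verify (i)--(iii) at the degenerate parameters $n\in\{0,\tfrac12,\Ol 0\}$, where one or more of $\G,\Ol\G,\De_2$ collapses to a point and $Q$ is $1$ or undefined. Here the statements are forced by the conventions $\Ol 0=0$ and $m/0=0$: for example $n=\tfrac12$ gives $\Ol n=\tfrac12$ with both sides of (ii) equal to $1$, while $n=0$ gives $\Ol n=\Ol 0$ with both sides equal to $0$; part (iii) is checked the same way using $d_1=d_2=r$ and $\Ol r=0$ when $n=0$. Establishing that the generic system truly has only the two reciprocal branches $Q$ and $1/Q$---so that the product $\sqrt{2n}\,\sqrt{2\Ol n}$ is $1$ rather than $Q^{2}$---is the one step I would take care to justify cleanly.
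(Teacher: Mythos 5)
Your proposal is correct, and parts of it genuinely reorganize the argument compared with the paper. Part (i) is identical: both proofs observe that $\CT(n)$ and $\CT(\Ol{n})$ share $\De_1,\De_2$, so Theorem \ref{tg}(ii) gives $|AK|=\sqrt{d_1d_2}=|A\Ol{K}|$. For part (ii) the paper's generic case first pins down \emph{which} of the two circles is the larger one by a geometric ordering argument (for $1/2<n$ the points $K$, $D_2$, $A$, $\Ol{K}$ lie in this order, whence $r<\Ol{r}$, so $r$ and $\Ol{r}$ take the two respective branches of Theorem \ref{tg}(iii)), and then computes $n\Ol{n}=1/4$ directly from \eqref{eqndef}. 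You instead avoid deciding which circle is which: you show the system $d_1=(\sqrt{2m}+1)^2\rho$, $d_2=(\sqrt{2m}-1)^2\rho$ has exactly the two reciprocal roots $\sqrt{2m}=Q,\,1/Q$, that both $(r,n)$ and $(\Ol r,\Ol n)$ solve it, and that they are distinct, so the product of the roots is forced to be $1$. That is a clean symmetric-functions argument that trades the paper's one geometric step for a uniqueness count (which you correctly flag as the step needing care, and which does check out since $d_1>d_2>0$ exactly when $n\notin\{0,1/2,\Ol{0}\}$). For part (iii) the paper simply adds the two squares $(\sqrt{d_1}\mp\sqrt{d_2})^2/4$ from Theorem \ref{tg}(iii), which is shorter than your derivation via $d_1+d_2=(4n+2)r$ and $4n\Ol n=1$, but both are valid; your route has the mild advantage of not needing to know which branch of (iii) applies to which circle. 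Your handling of the degenerate parameters $n\in\{0,1/2,\Ol{0}\}$ via the conventions $\Ol{0}=0$ and division by zero matches the paper's.
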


\begin{proof} The part (i) follows from Theorem \ref{tg}(ii). 
We prove (ii). Since $n=0$ and $\Ol{n}=\Ol{0}$ are equivalent, and 
$n=\Ol{0}$ and $\Ol{n}=0$ are also equivalent, both side of (ii) equal 
0 if $n=0$ or $n=\Ol{0}$ by the definition of the division by zero. 
Hence (ii) holds in this case. Let $n\not=0, \Ol{0}$. The case $n=1/2$ 
is trivial. If $1/2<n$, the points $K$, $D_2$, $A$, $\Ol{K}$ lie in 
this order. Hence $2\sqrt{d_2r}=|D_2K|<|D_2\Ol{K}|=2\sqrt{d_2\Ol{r}}$ 
by (i), i.e., $r<\Ol{r}$. Therefore $r=(\sqrt{d_1}-\sqrt{d_2})^2/4$ and 
$\Ol{r}=(\sqrt{d_1}+\sqrt{d_2})^2/4$ by Theorem \ref{tg}(iii). Hence 
$$
n\Ol{n}=\frac{|AK|+r}{2r}\cdot \frac{-|A\Ol{K}|+\Ol{r}}{2\Ol{r}}=
\frac{\left(\sqrt{d_1}+\sqrt{d_2}\right)^2}{2\left(\sqrt{d_1}-\sqrt{d_2}\right)^2} 
\cdot 
\frac{\left(\sqrt{d_1}-\sqrt{d_2}\right)^2}{2\left(\sqrt{d_1}+\sqrt{d_2}\right)^2}=\frac{1}{4}. 
$$
The rest of (ii) is proved similarly. The part (iii) follows from 
Thereom \ref{tg}(iii). 
\end{proof}

\begin{remark}{\rm 
One may think that the equation in (ii) should be expressed as 
$4n\Ol{n}=1$. But this does not hold in the case $n=0$ or $n=\Ol{0}$. 
However, the equation $2n=\frac{1}{2\Ol{n}}$ holds even in this case by 
the definition of the value $\Ol{0}$ and the definition of the division 
by zero.}
\end{remark}

\section{Parametric representation of the generalized Haga's fold}\label{srela}

Let $ABCD$ be a square with a point $E$ on the line $DA$. 
We assume that $m$ is the perpendicular bisector of the segment $CE$, $G$ 
is the reflection of $B$ in $m$, and $F$ is the point of intersection of 
the lines $AB$ and $EG$ if they meet, where we define $F=B$ in the case 
$E=A$. The figure consisting of $ABCD$ and the points $E$, $F$ (if 
exists) and $G$ is called the figure made by the generalized Haga's 
folds 
and denoted by $\CH$ \cite{OKSJM174}. Ordinary Haga's fold is obtained if 
$E$ lies between $D$ and $A$. Let $\De$ be the circle of radius $d=|AB|$ 
with center $C$. In this section  we give a parametric representation of 
$\CH$ using the circle touching the line $DA$ and the circle $\De$ 
externally. 

Let $\G$ be the circle touching the line $DA$ and $\De$ at the point of 
contact of $\De$ and the remaining tangent of $\De$ from $E$. Then 
$\G\longmapsto\CH$ is 
one-to-one correspondence between the set of the circles touching the 
line $DA$ and $\De$ externally and the set of the figures made by the 
generalized Haga's fold, where we consider that the point $D$ is a 
member of the former set and $D$ corresponds to the figure made by 
the generalized Haga's fold with $D=E$. If $\G$ is a circle of radius 
$r$ touching $\De$ externally and the line $DA$ at a point $K$, we define 
$n$ by \eqref{eqndef}. 
Let $T$ be the point of contact of 
$\G$ and $\De$. We explicitly denote the circle $\G$ by $\G(n)$ or 
$\G(-n)$ according as $T$ lies inside of $ABCD$ or outside of $ABCD$. If 
$T=B$, $\G$ is denoted by $\G(0)$. The point $D$ is denoted by 
$\G\left(\Ol{0}\right)$. Now any proper circle touching the line $DA$ and 
$\De$ externally can be expressed by $\G(n)$ for a real number $n$, and we 
also explicitly denote the figure $\CH$ by $\CH(n)$. If $D=E$, the figure 
is denoted by $\CH\left(\Ol{0}\right)$\footnote{$\CH\left(\Ol{0}\right)$ 
is denoted by $\CH(\infty)$ in \cite{OKSJM174}}. 

There are seven cases to be considered for $\CH$:  \\
(h1) $D$ lies between $E$ and $A$ and $b>d$, \\
(h2) $D$ is the midpoint of $EA$, i.e., $b=d$, \\
(h3) $D$ lies between $E$ and $A$ and $b<d$ \\
(h4) $D=E$, \\
(h5) $E$ lies between $D$ and $A$, \\
(h6) $E=A$, \\
(h7) $A$ lies between $D$ and $E$, 


\medskip
\begin{minipage}{0.5\hsize} 
\begin{center} 
\includegraphics[clip,width=48mm]{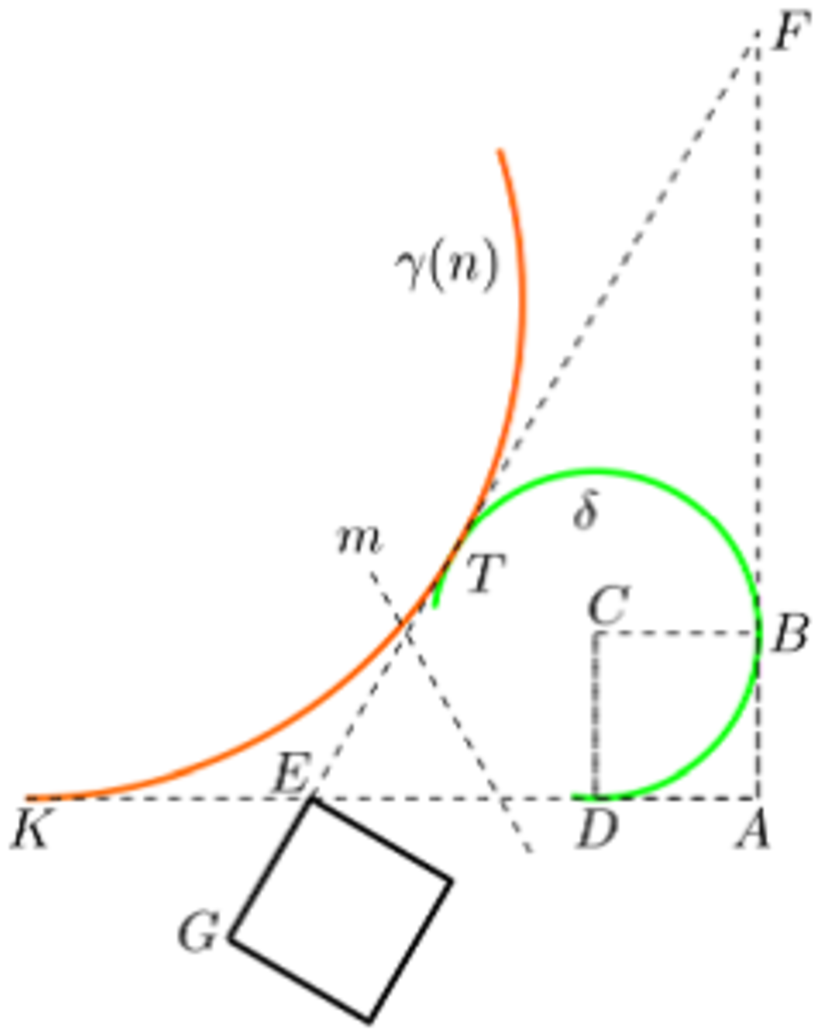}\refstepcounter{num}\label{fsep1}\\
Figure \Fg : (h1) $\CH(n)$, $(-2<n<-1/2)$
\end{center}  
\end{minipage}
\begin{minipage}{0.5\hsize}
\begin{center} 
\vskip24.6mm
\includegraphics[clip,width=48mm]{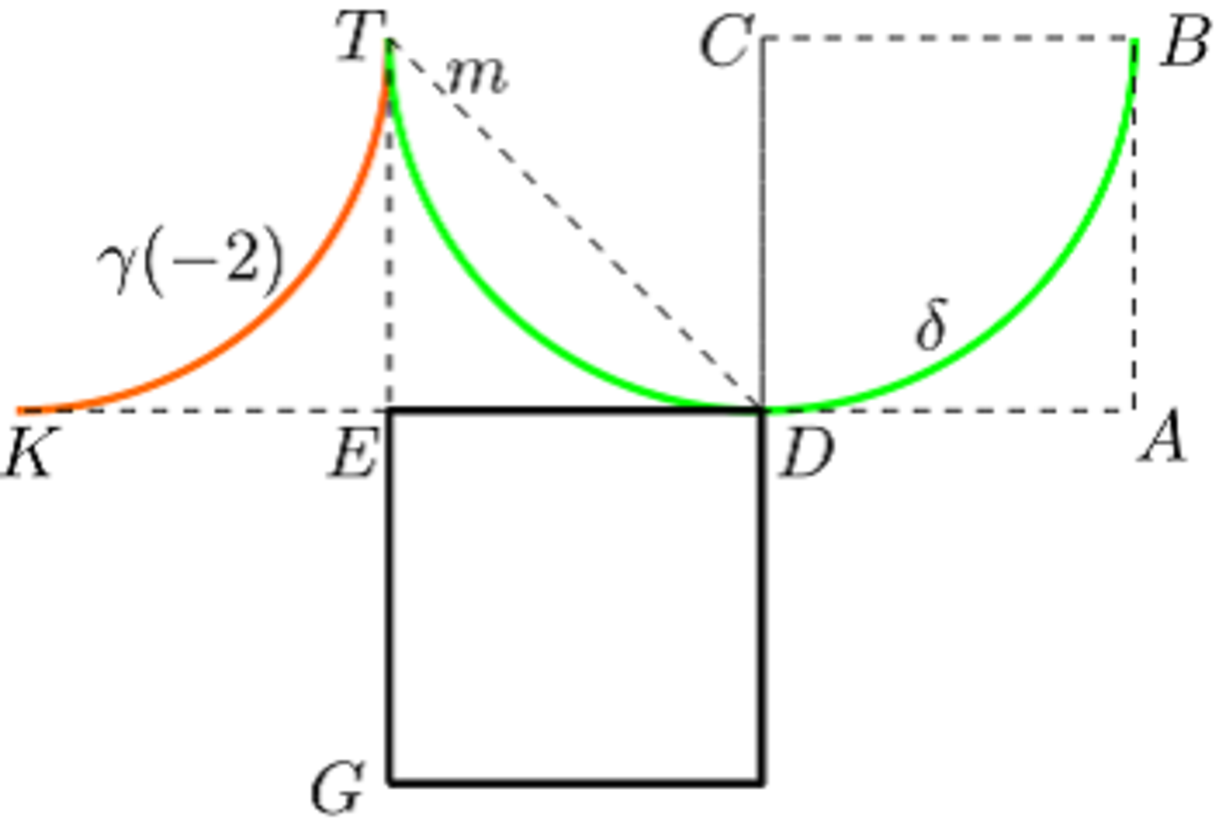}\refstepcounter{num}\label{fsep22}\\
Figure \Fg : (h2) $\CH(-2)$ 
\end{center}  
\end{minipage}
\medskip

\begin{minipage}{0.5\hsize}
\begin{center} 
\vskip3mm
\includegraphics[clip,width=48mm]{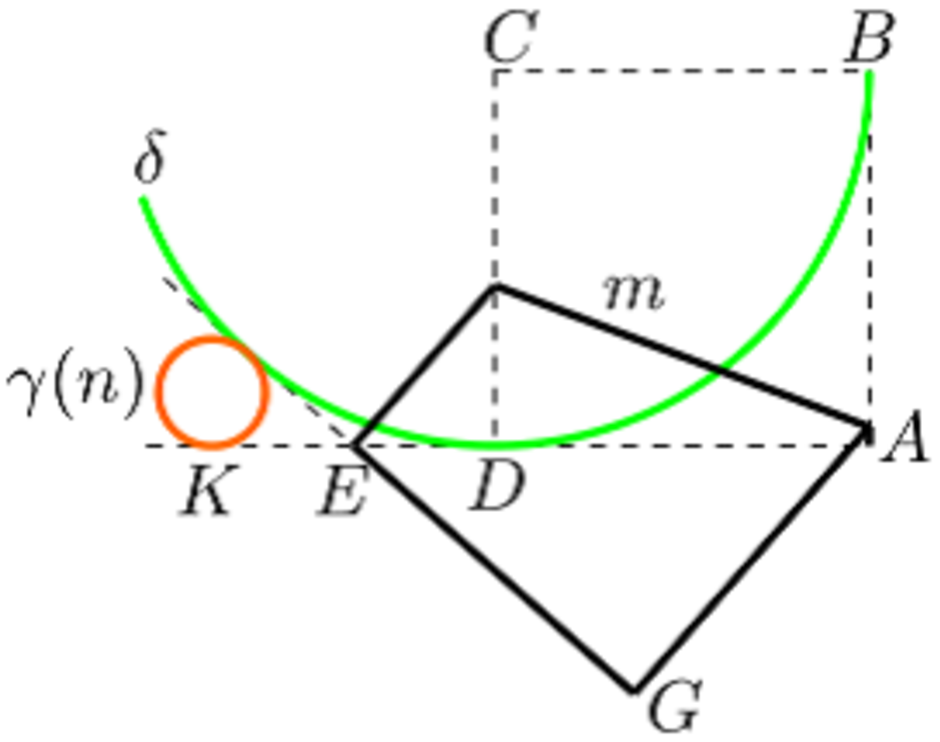}\refstepcounter{num}\label{fh1cc}\\
\vskip3mm
Figure \Fg : (h3) $\CH(n)$, $(n<-2)$ 
\end{center} 
\end{minipage} 
\begin{minipage}{0.5\hsize}
\begin{center} 
\includegraphics[clip,width=48mm]{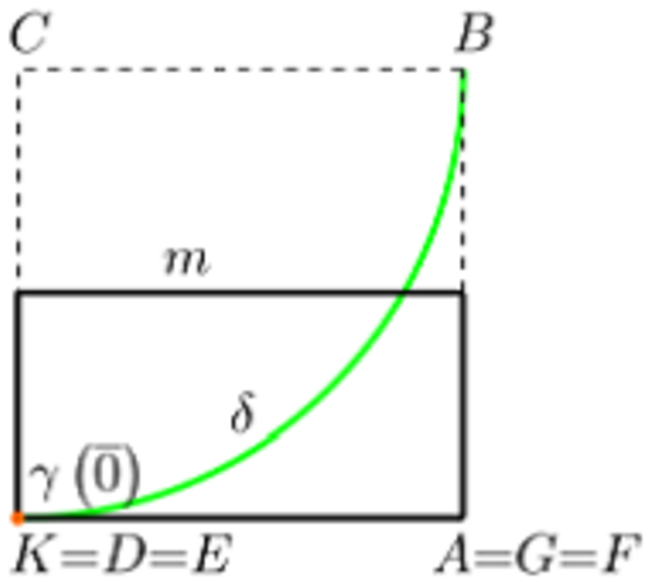}\refstepcounter{num}\label{finf}\\
Figure \Fg :  (h4) $\CH\left(\Ol{0}\right)$
\end{center}  
\end{minipage}
\medskip

\medskip
\begin{minipage}{0.45\hsize}
\begin{center} 
\vskip3.5mm
\includegraphics[clip,width=48mm]{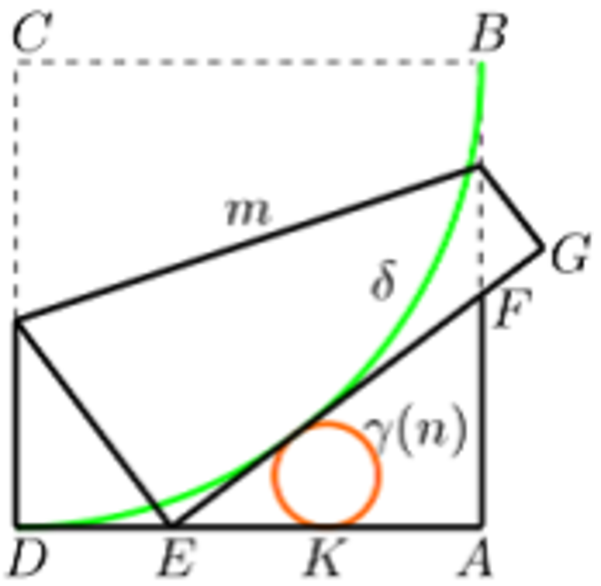}\refstepcounter{num}\label{fh5cc}\\
\vskip2.4mm
Figure \Fg : (h5) $\CH(n)$, $(0<n)$
\end{center}  
\end{minipage}
\begin{minipage}{0.55\hsize} 
\begin{center} 
\vskip1.6mm
\includegraphics[clip,width=57mm]{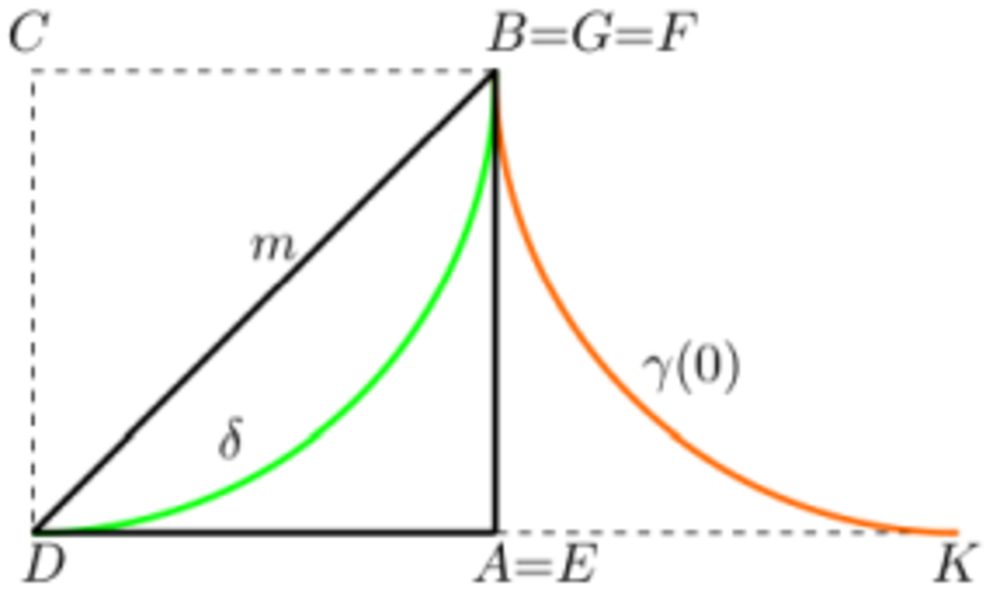}\refstepcounter{num}\label{f0}\\
\vskip-0.8mm
Figure \Fg : (h6), $\CH(0)$
\end{center}  
\end{minipage}

\medskip
\begin{minipage}{0.53\hsize} 
\begin{center} 
\includegraphics[clip,width=48mm]{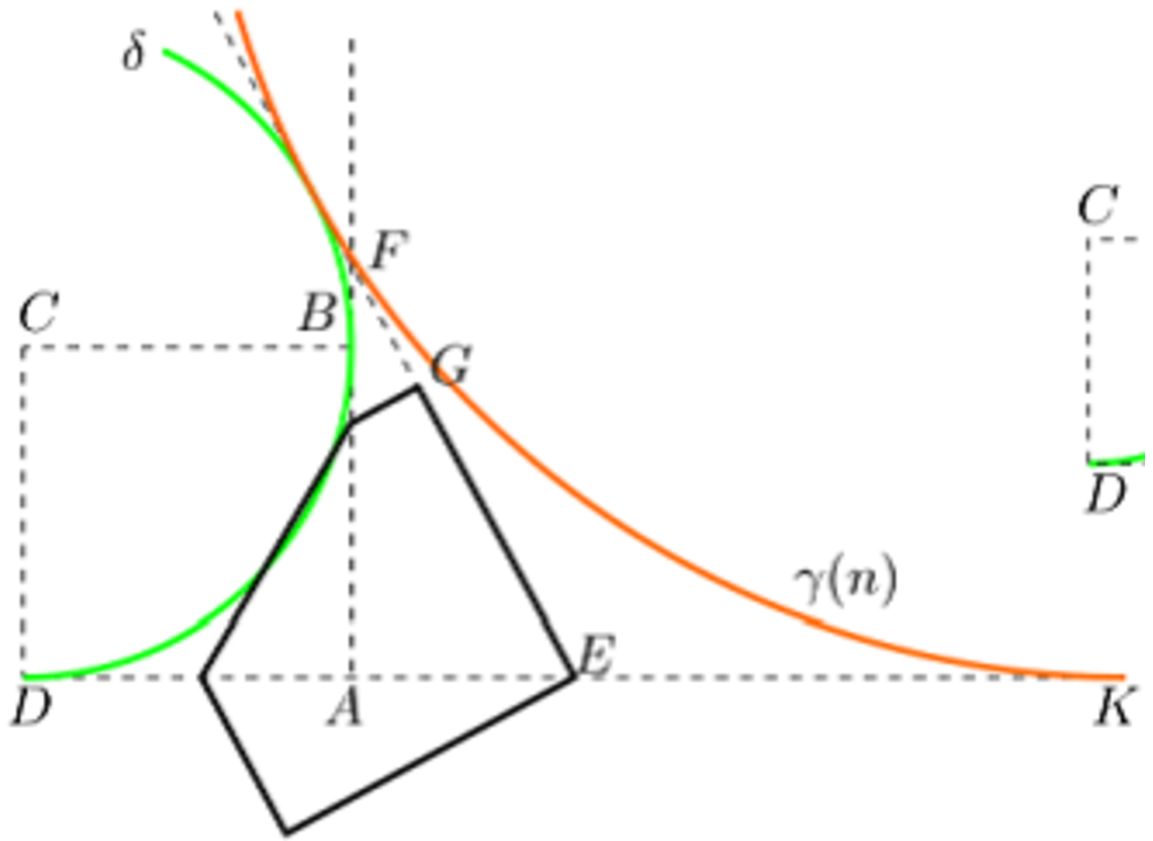}\refstepcounter{num}\label{fh3cc}\\
Figure \Fg : (h7) $\CH(n)$, $(-1/2<n<0)$ 
\end{center} 
\end{minipage} 
\begin{minipage}{0.42\hsize} 
\begin{center} 
\vskip4mm
\includegraphics[clip,width=58mm]{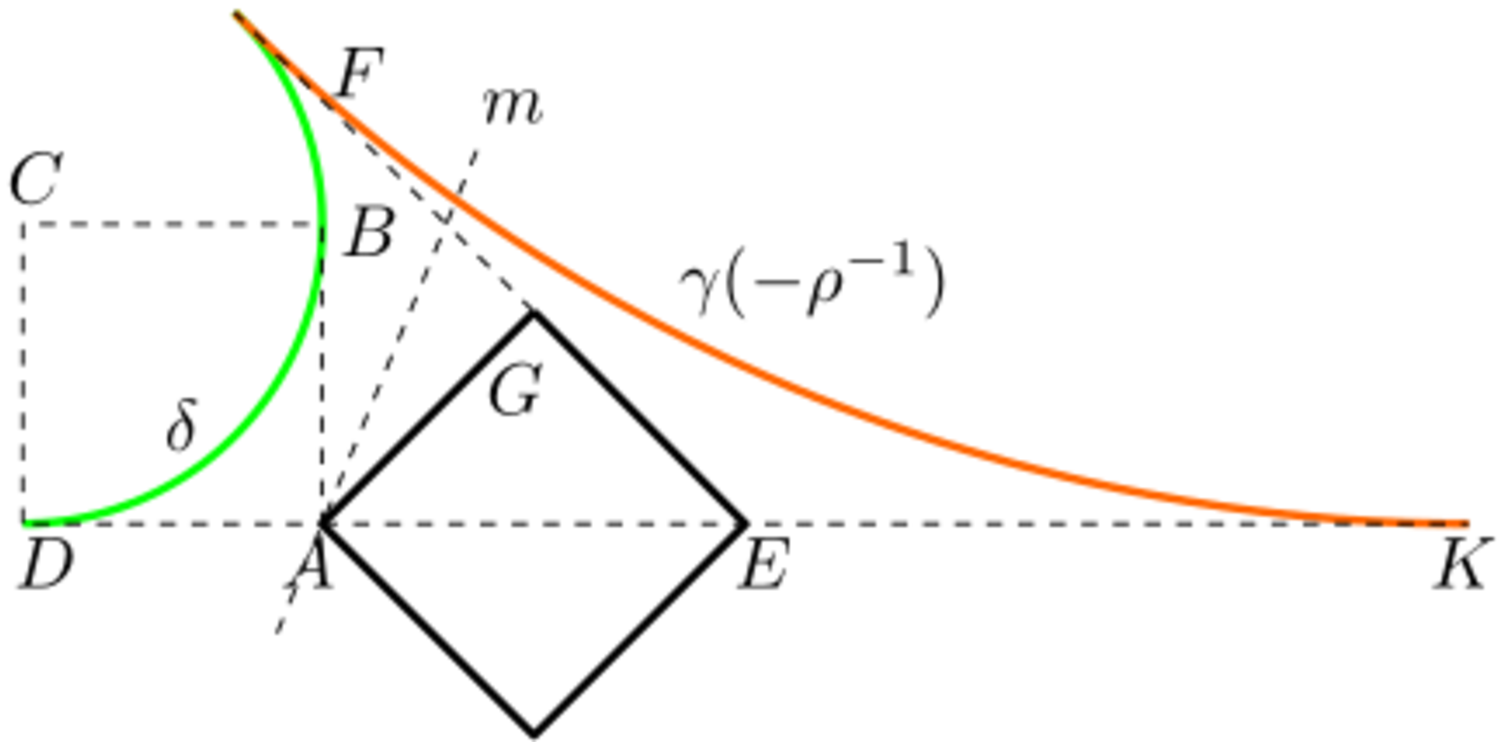}\refstepcounter{num}\label{fsep3}\\
\vskip8.8mm
Figure \Fg : (h7) $\CH(-\rho^{-1})$
\end{center}  
\end{minipage}
\medskip

We consider the value of $n$ for $\CH(n)$ as a function of the point 
$E$ when $E$ moves on the line $DA$. We also specify the case in which 
the crease line $m$ passes through the inside of $ABCD$. In the cases 
(h1), (h2), (h3), the point $T$ lies outside of $ABCD$. Therefore $n<0$, 
and by Proposition \ref{p1} we have 
\begin{equation}\label{eqfunc1}
n=-\left(\frac{|AK|}{2r}+\frac{1}{2}\right)=-\frac{d+2\sqrt{dr}}{2r}-\frac{1}{2}
=-\sqrt{\frac{d}{r}}\left({\frac{1}{2}\sqrt\frac{d}{r}}+1\right)-\frac{1}{2}. 
\end{equation}
Therefore $n$ is a monotonically increasing function of $r$, i.e., $n$ 
decreases when $E$ 
moves with moving direction same as to $\Ov{DA}$. Hence $n$ approaches $-1/2$ 
when $D$ lies between $A$ and $E$ and $E$ moves away from $D$, i.e., $n<-1/2$. 
Therefore we get $-2<n<-1/2$, $n=-2$, $n<-2$ in the cases h(1), h(2), h(3), 
respectively. The crease line $m$ does not pass through the inside of $ABCD$ 
(see Figure \ref{fsep1}) in the case h(1). In the case (h2), $m$ passes 
through $D$ but does not pass through the inside of $ABCD$ (see Figure \ref{fsep22}). 
In the cases (h3), (h4), (h5), (h6), $m$ passes through the inside of $ABCD$ 
(see Figures \ref{fh1cc} to \ref{f0}), and we get $n=\Ol{0}$, $0<n$, $n =0$ in 
the cases (h4), h(5), (h6), respectively. The values of $n$ also decreases when 
$E$ moves from $D$ to $A$ in the case (h5). 

Let us consider the case (h7) (see Figure \ref{fh3cc}). Since $T$ lies 
outside of $ABCD$, we get 
\begin{equation}\label{eqfunc2}
n=-\left(-\frac{|AK|}{2r}+\frac{1}{2}\right)=\frac{2\sqrt{dr}-d}{2r}-\frac{1}{2}
=-\sqrt{\frac{d}{r}}\left({\frac{1}{2}\sqrt\frac{d}{r}}-1\right)-\frac{1}{2}. 
\end{equation}
Hence $n$ is a monotonically decreasing function of $r$. Therefore $n$ 
approaches $-1/2$ when $r$ increases to $+\infty$, i.e., $n$ decreases 
when $E$ moves away from $A$, and we have $-1/2<n<0$. 

Let $\rho=\left(1+\sqrt{2}\right)^2$. 
We consider the special case in which $m$ passes through the point $A$. 
This happens if and only if $|AE|=\sqrt{2}d$, or $AEF$ is an isosceles 
right triangle (see Figure \ref{fsep3}). In this event 
$|DE|=|DK|/2=\sqrt{dr}=d+\sqrt{2}d$ holds by Proposition \ref{p1}. 
The equation implies $d/r=\rho^{-1}$, and we get $n=-\rho^{-1}$ by 
\eqref{eqfunc2}. Therefore {\it $m$ passes through the inside of $ABCD$ if 
and only if $-\rho^{-1}<n<0$ in the case} (h7). 

We summarize the results (see Table 1 also). The arrows in the table show 
that $n$ is a monotonically decreasing function of $E$ when $E$ moves on the 
line $DA$ with moving direction same as to $\Ov{DA}$. {\it The crease line 
$m$ passes through the inside of $ABCD$ in the cases 
{\rm (h3), (h4), (h5), (h6)}, and {\rm (h7)} if $-\rho^{-1}<n<0$.} 

\medskip
\begin{center}
\begin{tabular}{|c|c|c|c|c|c|c|c|}
\hline 
case & (h1)      &(h2)&(h3)     &(h4)    &(h5)      &(h6)& (h7)     \\\hline
$n$&$-2<n<-1/2$&$-2$&$n<-2$&\hbox{\raisebox{-.3ex}[0ex][0ex]{$\Ol{0}$}}&$0<n$     &$0$ &$-1/2<n<0$\\\hline
     &$\searrow$ &    &$\searrow$&       &$\searrow$&    &$\searrow$\\
\hline
\end{tabular} \\
\medskip
Table 1.
\end{center}

The above observation shows that $n\not=-1/2$, while the remaining tangent 
of the circle $\De$ parallel to $DA$ is not a member of the set of circles 
touching the line $DA$ and $\De$ externally. The fact suggests to 
describe the tangent by $\G(-1/2)$.

\section{Special cases}

In this section we consider special cases for the figures $\CH(n)$. At first 
we consider two special cases of Haga's fold in the case 
(h5). If $E$ is the midpoint of $DA$, $F$ divides $AB$ in the ratio $2:1$ 
internally \cite[Theorem 3.1]{OKSJM174}. The fact is called Haga's first 
theorem \cite{koshiro}. While Theorem \ref{tccn}(i) shows that this happens 
if and only if $n=1/2$. Therefore the figure of Haga's first theorem 
coincides with $\CH(1/2)$ (see Figure \ref{fchalf}). A problem considering 
$\CH(1/2)$ giving the relation $d=4r$ can be found in \cite{kim}. 

Similarly if $F$ is the midpoint of $AB$, $E$ divides $DA$ in the ratio 
$1:2$ internally. The fact is called Haga's third theorem \cite{koshiro}. 
While Theorem \ref{tccn}(i) shows that this happens if and only if $n=2$. 
Hence the figure of Haga's third theorem coincides with $\CH(2)$ 
(see Figure \ref{fc2}). Since $E$ is the midpoint of the segment $DK$, 
$E$ and $K$ are the points of trisection of the side $DA$. 
The remaining circle touching $DA$ and $\De_1$ and $\De_2$ externally is 
$\G(1/8)$ by Theorem \ref{ttwo}(ii). Theorem \ref{tccn}(ii) shows that $K$ 
coincides with the point of intersection of $DA$ and the internal common 
tangent of $\G(1/8)$ and $\De$. Notice that $\CH(2)$ seems to be most 
frequently considered among $\CH(n)$ in Wasan geometry as we have shown 
in section \ref{srw}. 

\medskip
\begin{minipage}{.42 \hsize} 
\begin{center} 
\vskip5.6mm
\includegraphics[clip,width=48mm]{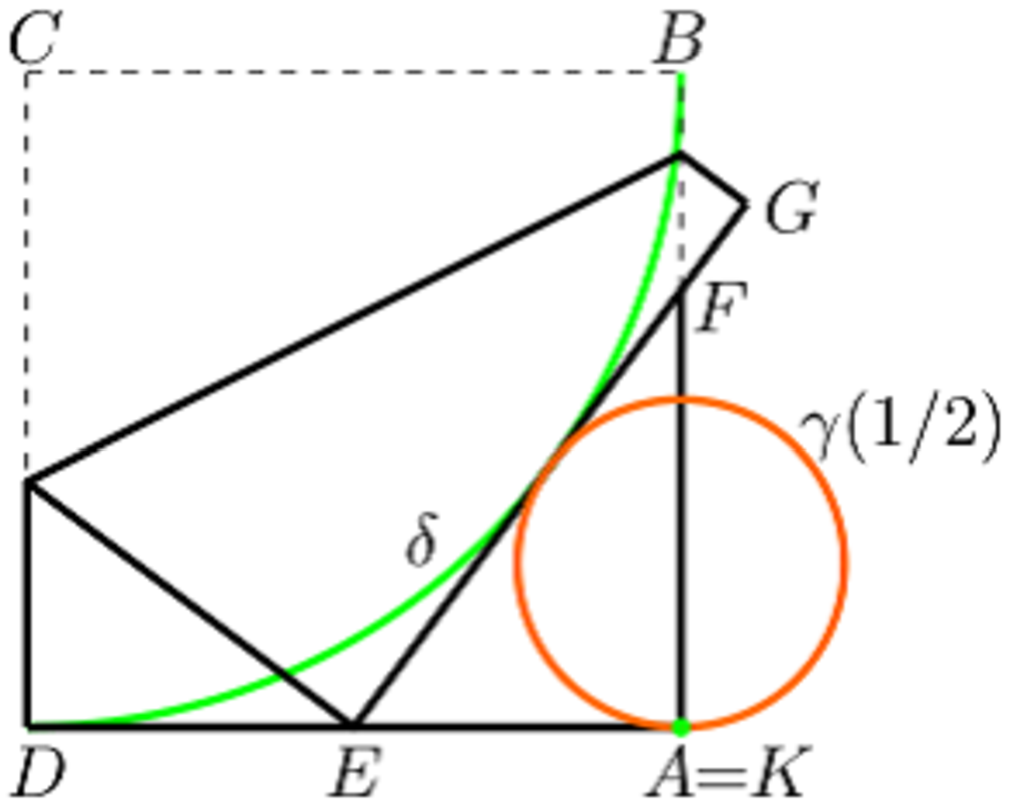}\refstepcounter{num}\label{fchalf}\\
\vskip2.6mm
Figure \Fg : $\CH(1/2)$
\end{center}  
\end{minipage}
\begin{minipage}{.58 \hsize}
\begin{center} 
\vskip0mm
\includegraphics[clip,width=48mm]{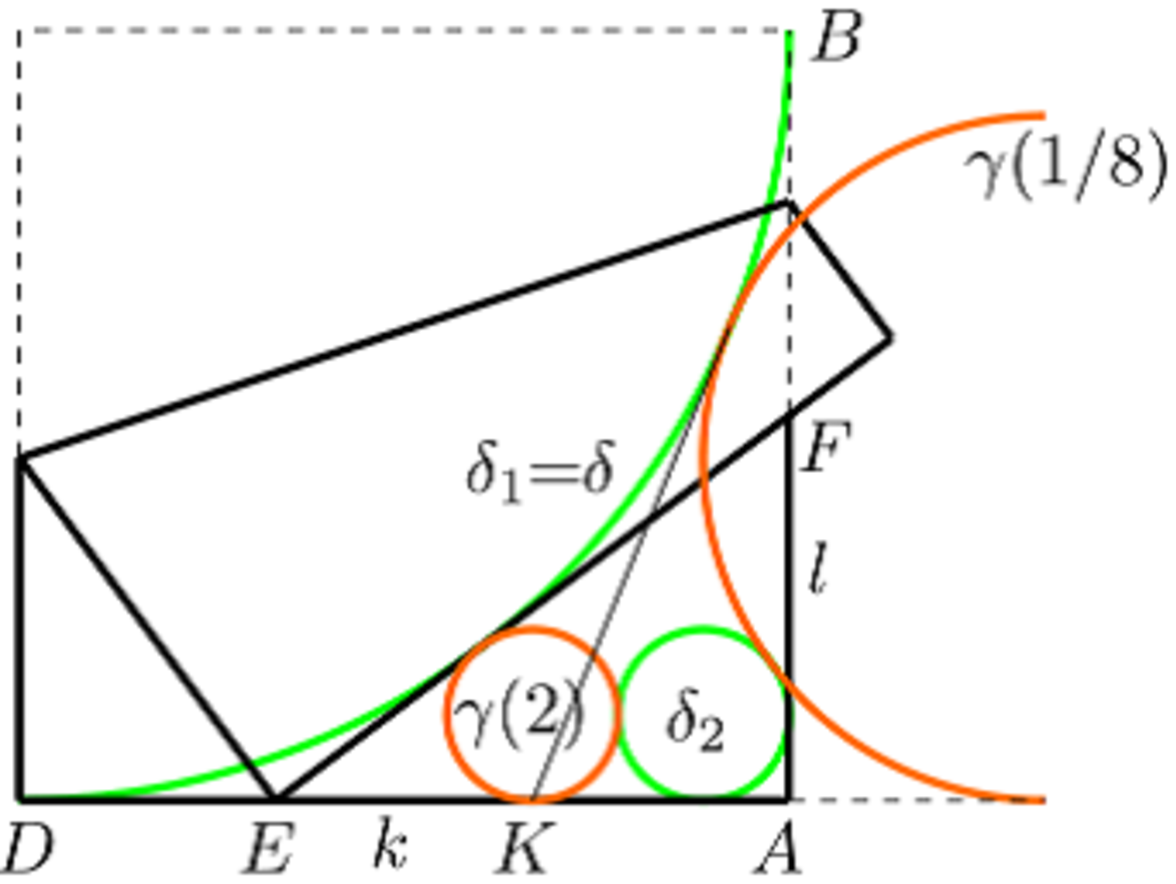}\refstepcounter{num}\label{fc2}\\
\vskip2mm
Figure \Fg : $\CH(2)$ with $\G(1/8)$
\end{center} 
\end{minipage} 
\medskip

\section{Conclusion}

We argued the merit of considering circles in the geometry of origami 
in \cite{OKSCS16, OKK16}. In this two-part paper we have shown several 
examples to verify the validity of our assertion. The circles we have 
considered are 
tangent circles except the circumcircle of a triangle considered in the 
first part of the paper. In this sense we can say that the geometry of 
origami is a geometry of tangent circles. 
In particular, the incircle and the excircles of a right triangle play 
important roles in the geometry of origami using a square piece of paper
as shown in the first part of this paper.





\end{document}